\newcommand{\noun}[1]{\textsc{#1}}
\providecommand{\tabularnewline}{\\}
\numberwithin{equation}{section}
\numberwithin{figure}{section}
\theoremstyle{plain}
\newtheorem{thm}{\protect\theoremname}
\theoremstyle{plain}
\newtheorem{prop}[thm]{\protect\propositionname}
\theoremstyle{remark}
\newtheorem{rem}[thm]{\protect\remarkname}
\theoremstyle{plain}
\newtheorem{lem}[thm]{\protect\lemmaname}
\providecommand{\lemmaname}{Lemma}
\providecommand{\propositionname}{Proposition}
\providecommand{\remarkname}{Remark}
\providecommand{\theoremname}{Theorem}
\begin{document}
\title{\noun{deformation of the Okubic Albert algebra and its relation to
the Okubic affine and projective planes}}
\author{Daniele Corradetti$^{*}$, Alessio Marrani$^{\dagger}$, Francesco Zucconi$^{\ddagger}$}
\maketitle
\begin{abstract}
We present a deformation of the Okubic Albert algebra introduced by
Elduque whose rank-1 idempotent elements are in biunivocal correspondence
with points of the Okubic projective plane, thus extending to Okubic
algebras the correspondence found by Jordan, Von Neumann, Wigner between
the octonionic projective plane and the Albert algebra $\mathfrak{J}_{3}\left(\mathbb{O}\right)$. 

\medskip{}

\textbf{Msc}: 17A20; 17A35; 17A75; 51A35. 
\end{abstract}

\section{Introduction }

All unital composition algebras, i.e. all Hurwitz algebras such as
$\mathbb{R}$, $\mathbb{C}$, $\mathbb{H}$,$\mathbb{O}$ and all
their split companions, allow the construction of projective lines
and planes which found interesting and useful applications in alternative
formulations of Quantum Mechanics \cite{Jordan}. Inspired by an algebrical
definition of the Spin group, given by Elduque in \cite{ElDuque Comp},
in a previous work two of the authors defined an affine and projective
plane over the Okubo algebra \cite{Corr-OkuboSpin}, noting that,
with little modifications, Veronese coordinates could still be used
even though Okubo algebra is neither alternative nor unital (a review
of this construction is done in sec. 4). Crucial to this construction
is the \emph{flexibility} of the algebra, i.e. for every $x$ and
$y$ in the algebra we have $x*\left(y*x\right)=\left(x*y\right)*x$,
which is a softer requirement than alternativity since all alternative
algebras are also flexible, as famously shown by Artin, but not the
other way around. Flexibility, together with the property of composition
of the norm, defines a \emph{symmetric composition} algebra (whose
basics are here briefly reviewed in sec. 2). 

The real Okubo algebra $\mathcal{O}$, also known as pseudoctonions,
is deeply linked to octonions $\mathbb{O}$ since a deformation of
the Okubic product results in the octonionic multiplication and the
converse is also true \cite{Okubo 1978,Okubo 78c}. In the octonionic
case, but more generally in any Hurwitz algebra, a simple and elegant
relation links rank-1 idempotents of the Albert algebra $\mathfrak{J}_{3}\left(\mathbb{O}\right)$
and points in the projective plane $\mathbb{O}P^{2}$. We find that
a similar relation still exists between rank-1 idempotents of a deformation
of the \emph{Okubic Albert algebra} $\mathbb{A}_{q}\left(\mathcal{O}\right)$
introduced by Elduque in \cite{Elduque Gradings SC} and points of
the Okubic projective plane $\mathcal{O}P^{2}$. While the Okubic
Albert algebra is a Jordan algebra (isomorphic to $\mathfrak{J}_{3}\left(\mathbb{O}_{s}\right)$,
see \cite[Thm 5.15]{Elduque Gradings SC}), its deformation for $q=1/2$
it is not, even though the algebra remains commutative and flexible.
Nevertheless $\mathbb{A}_{1/2}\left(\mathcal{O}\right)$ is, to the
best of our current knowledge, the unique algebra that turns the following
diagram over the point of the affine plane $\mathcal{O}A^{2}$ into
a commutative diagram, i.e.
\begin{equation}
\xymatrix{\underset{\text{points}}{\mathcal{O}A^{2}} & \ar[r] & \underset{\text{Veronese vectors}}{\mathcal{O}P^{2}}\\
 & \ar[rd] & \ar[d]\\
 &  & \underset{\begin{array}{c}
\text{Tr}=1,\\
\text{idempotent elements}
\end{array}}{\mathbb{A}_{1/2}\left(\mathcal{O}\right)}
}
\end{equation}

To be more explicit, we here anticipate that, while it is possible
to modify the Veronese conditions in (\ref{eq:Ver-1-1}) and (\ref{eq:Ver-2-1})
in order to obtain the identification between trace-one idempotents
of $\mathbb{A}_{q}\left(\mathcal{O}\right)$ and Veronese vectors,
it is not possible in general, i.e. for any value of the deformation
parameter $q$, to also maintain the correspondence between the newly
defined Veronese vectors and the points of the Okubic affine plane
$\mathcal{O}A^{2}$ with a modification of (\ref{eq:corrispondenza1}).

In section 2 we present symmetric composition algebras and Okubo algebras,
mainly following \cite{Elduque Gradings SC,ElDuque Comp,ElduQueAut}
but introducing also some new elements. In section 3 we present for
the first time the Okubic projective line showing that is the one
point compactification of the Okubo algebra $\mathcal{O}$. In section
4 we present the Okubic affine and projective plane, following \cite{Corr-OkuboSpin},
and briefly showing the embedding of the affine plane into the projective
plane. In section 5 we introduce the deformed Okubic Albert algebra
$\mathbb{A}_{q}\left(\mathcal{O}\right)$ and show the correspondence
between its trace-one idempotents and the Veronese vectors. Finally,
inspired by \cite{Freud 1965}, we investigate and characterise the
automorphisms of $\mathbb{A}_{q}\left(\mathcal{O}\right)$.

\section{The Okubo algebra}

An \emph{algebra} $A$ is a vector space over a ground field $\mathbb{K}$
with a bilinear multiplication. If $A$ is also endowed with a non-degenerate
quadratic norm $n$ from $A$ to $\mathbb{K}$ such that 
\begin{equation}
n\left(x\cdot y\right)=n\left(x\right)n\left(y\right),
\end{equation}
for every $x,y\in A$, then the algebra is called a \emph{composition
algebra}. If the algebra is also unital, i.e. if it exists an element
$1$ such that $x\cdot1=1\cdot x=x$, then the algebra is an \emph{Hurwitz
algebra}. 

The interplay between the composition property of the norm and the
existence of a unit element is full of interesting implications \cite[Sec. 2]{ElDuque Comp}.
Indeed, every Hurwitz algebra is endowed with an order-two anti-automorphism
called \emph{conjugation}, defined as 
\begin{equation}
\overline{x}\coloneqq\left\langle x,1\right\rangle 1-x,\label{eq:conjugation}
\end{equation}
where $\left\langle x,y\right\rangle $ is the polar form of the norm
which is given by
\begin{equation}
\left\langle x,y\right\rangle =n\left(x+y\right)-n\left(x\right)-n\left(y\right).\label{eq:linearization}
\end{equation}
Moreover, definition (\ref{eq:linearization}), combined with the
composition, yields to the notable relations 
\begin{align}
\left\langle x\cdot y,z\right\rangle  & =\left\langle y,\overline{x}\cdot z\right\rangle ,\\
\left\langle y\cdot x,z\right\rangle  & =\left\langle y,z\cdot\overline{x}\right\rangle ,
\end{align}
 that imply (cfr. \cite[Prop. 2.2]{ElDuque Comp}) on the conjugation
that
\begin{equation}
x\cdot\overline{x}=n\left(x\right)1\,\,\,\,\text{and}\,\,\,\,\,\overline{x\cdot y}=\overline{y}\cdot\overline{x},
\end{equation}
and finally, a relation that is crucial for the consistent definition
of the Veronese coordinates on composition algebras, i.e. 
\begin{equation}
x\cdot\left(\overline{x}\cdot y\right)=\left(x\cdot\overline{x}\right)\cdot y=n\left(x\right)y.
\end{equation}
It is well known that, up to isomorphisms, the only Hurwitz algebras
are $\mathbb{R},\mathbb{C},\mathbb{H}$ and $\mathbb{O}$ along with
their split companions \cite[Cor. 2.12]{ElDuque Comp}. Particularly
relevant for this work are the split octonions $\mathbb{O}_{s}$,
which we will define as the real vector space endowed with the multiplication
table in Tab. \ref{tab:Split Octonions} over the canonical base $\left\{ e_{1},e_{2},u_{1},u_{2},u_{3},v_{1},v_{2},v_{3}\right\} $
obtained starting from two idempotents $e_{1}$ and $e_{2}$ and two
three-dimensional real vector subspaces defined as $V=\left\{ e_{1}\cdot v=0\right\} $
and $U=\left\{ e_{2}\cdot u=0\right\} $.
\begin{center}
\begin{table}
\begin{centering}
\begin{tabular}{|c|cc|ccc|ccc|}
\hline 
 & $e_{1}$ & $e_{2}$ & $u_{1}$ & $u_{2}$ & $u_{3}$ & $v_{1}$ & $v_{2}$ & $v_{3}$\tabularnewline
\hline 
$e_{1}$ & $e_{1}$ & 0 & $u_{1}$ & $u_{2}$ & $u_{3}$ & 0 & 0 & 0\tabularnewline
$e_{2}$ & 0 & $e_{2}$ & 0 & 0 & 0 & $v_{1}$ & $v_{2}$ & $v_{3}$\tabularnewline
\hline 
$u_{1}$ & 0 & $u_{1}$ & 0 & $v_{3}$ & $-v_{2}$ & $-e_{1}$ & 0 & 0\tabularnewline
$u_{2}$ & 0 & $u_{2}$ & $-v_{3}$ & 0 & $v_{1}$ & 0 & $-e_{1}$ & 0\tabularnewline
$u_{3}$ & 0 & $u_{3}$ & $v_{2}$ & $-v_{1}$ & 0 & 0 & 0 & $-e_{1}$\tabularnewline
\hline 
$v_{1}$ & $v_{1}$ & 0 & $-e_{2}$ & 0 & 0 & 0 & $u_{3}$ & $-u_{2}$\tabularnewline
$v_{2}$ & $v_{2}$ & 0 & 0 & $-e_{2}$ & 0 & $-u_{3}$ & 0 & $u_{1}$\tabularnewline
$v_{3}$ & $v_{3}$ & 0 & 0 & 0 & $-e_{2}$ & $u_{2}$ & $-u_{1}$ & 0\tabularnewline
\hline 
\end{tabular}
\par\end{centering}
\medskip{}

\caption{\label{tab:Split Octonions}Multiplication table of the split octonions
in the canonical base, from \cite{ElDuque Comp}.}

\end{table}
\par\end{center}

\subsection*{Symmetric composition algebras}

Starting from any Hurwitz algebra $\left(A,\cdot,n\right)$ with conjugation
(\ref{eq:conjugation}) we define a new multiplication 
\begin{equation}
x\circ y\coloneqq\overline{x}\cdot\overline{y},
\end{equation}
that, again, is composition with respect to the norm, i.e. $n\left(x\circ y\right)=n\left(x\right)n\left(y\right)$.
This allows to define another algebra $\left(A,\circ,n\right)$, called\emph{
para-Hurwitz} algebra \cite{OkMy80}. Since $1\circ x=x\circ1=\overline{x}$,
then para-Hurwitz algebras are not unital (thus their existence do
not contradict Hurwitz theorem \cite{ElDuque Comp}); nevertheless,
they are composition and flexible and therefore \emph{symmetric composition}
\cite{KMRT}, i.e. they satisfy the following relation
\begin{equation}
\left\langle x\cdot y,z\right\rangle =\left\langle x,y\cdot z\right\rangle ,
\end{equation}
 or, equivalently, the following one
\begin{equation}
x\circ\left(y\circ x\right)=\left(x\circ y\right)\circ x=n\left(x\right)y.
\end{equation}

A similar approach was already used by Petersson \cite{Petersson 1969}
who used, instead of an anti-homomorphism of order two, an homomorphism
of order three $\tau$ that he used to deform the product of the Hurwitz
algebras $\left(A,\cdot,n\right)$ as follows
\begin{equation}
x*y\coloneqq\tau\left(\overline{x}\right)\cdot\tau^{2}\left(\overline{y}\right).
\end{equation}
Again, the algebra $\left(A,*,n\right)$ turned out to be a symmetric
composition algebra, i.e. 
\begin{align}
n\left(x*y\right) & =n\left(x\right)n\left(y\right),\\
x*\left(y*x\right) & =\left(x*y\right)*x=n\left(x\right)y.
\end{align}
The interesting case is here given when the starting Hurwitz algebra
is the one of split octonions $\mathbb{O}_{s}$ and the order-three
homomorphism is defined as
\begin{equation}
\tau\left(e_{i}\right)\coloneqq e_{i},\,\,\,\tau\left(u_{i}\right)\coloneqq u_{i+1},\,\,\,\,\tau\left(v_{i}\right)\coloneqq v_{i+1},
\end{equation}
 where indices are considered modulo $3$ and $\left\{ e_{1},e_{2},u_{1},u_{2},u_{3},v_{1},v_{2},v_{3}\right\} $
is the canonical base of $\mathbb{O}_{s}$. In this specific case
the construction yields to a non-unital symmetric composition algebra
that is not obtainable through the use of the previous order-two deformation,
and that was indipendently discovered by Okubo\cite{Okubo 78c}; nowadays,
this algebra is named \emph{Okubo algebra} , and denoted by $\mathcal{O}$
\cite{Elduque Gradings SC}.

\subsection*{Okubo and split-Okubo algebras }

Let $\eta_{1}$ be $\text{diag}\left(1,1,1\right)$ while $\eta_{2}$
be $\text{diag}\left(-1,1,1\right)$. Then, consider the real vector
spaces of three by three traceless matrices over the complex numbers
$\mathbb{C}$ such that 
\begin{equation}
x^{\dagger}\coloneqq\eta_{i}x\eta_{i},
\end{equation}
for $i=1,2$, where $x^{\dagger}$ is the transpose conjugate of $x$.
The previous real vector space is an algebra if provided with the
following product 
\begin{equation}
x*y\coloneqq\mu\cdot xy+\overline{\mu}\cdot yx-\frac{1}{3}\text{Tr}\left(xy\right)\text{Id},
\end{equation}
where $\mu=\nicefrac{1}{6}\left(3+\text{i}\sqrt{3}\right)$ and the
juxtaposition is the ordinary associative product between matrices.
Such algebra is called \emph{Okubo algebra} $\mathcal{O}$ if $i=1$
and \emph{split-Okubo algebra} $\mathcal{O}_{s}$ if $i=2$. In both
cases the algebra is flexible and non-unital and it is also a composition
algebra using the norm 
\begin{equation}
n\left(x\right)\coloneqq\frac{1}{6}\text{Tr}\left(x^{2}\right).
\end{equation}
A canonical basis for both algebras is provided by an idempotent element,
i.e. $e*e=e$, which is 
\begin{equation}
e=\left(\begin{array}{ccc}
2 & 0 & 0\\
0 & -1 & 0\\
0 & 0 & -1
\end{array}\right),\label{eq:idempotentElement}
\end{equation}
 together with other seven elements

\begin{align}
\text{i}_{1}=\left(\begin{array}{ccc}
0 & 1 & 0\\
\gamma1 & 0 & 0\\
0 & 0 & 0
\end{array}\right) & ,\,\,\,\,\text{i}_{2}=\left(\begin{array}{ccc}
0 & -\gamma i & 0\\
i & 0 & 0\\
0 & 0 & 0
\end{array}\right),\nonumber \\
\text{i}_{3}=\left(\begin{array}{ccc}
1 & 0 & 0\\
0 & -1 & 0\\
0 & 0 & 0
\end{array}\right) & ,\,\,\,\,\text{i}_{4}=\left(\begin{array}{ccc}
0 & 0 & 1\\
0 & 0 & 0\\
\gamma1 & 0 & 0
\end{array}\right),\\
\text{i}_{5}=\left(\begin{array}{ccc}
0 & 0 & -\gamma i\\
0 & 0 & 0\\
i & 0 & 0
\end{array}\right), & \,\,\,\,\,\,\text{i}_{6}=\left(\begin{array}{ccc}
0 & 0 & 0\\
0 & 0 & 1\\
0 & 1 & 0
\end{array}\right),\text{i}_{7}=\left(\begin{array}{ccc}
0 & 0 & 0\\
0 & 0 & -i\\
0 & i & 0
\end{array}\right),\label{eq:definizione i ottonioniche}
\end{align}
 where $\gamma=1$ in case of $\mathcal{O}$ and $\gamma=-1$ in the
case of $\mathcal{O}_{s}$. Thus, the Okubo algebra $\mathcal{O}$
and its split version $\mathcal{O}_{s}$ are eight-dimensional real
algebras. A proof that they are not isomorphic is easily obtained
as corollary of the following
\begin{prop}
\label{prop:division}The Okubo Algebra $\mathcal{O}$ is a division
algebra, while the split Okubo Algebra $\mathcal{O}_{s}$ has non-trivial
divisors of zero.
\end{prop}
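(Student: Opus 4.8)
The plan is to reduce the whole statement to the behaviour of the composition norm $n(x)=\tfrac{1}{6}\text{Tr}(x^{2})$, using only the multiplicativity $n(x*y)=n(x)n(y)$ already recorded above for both $\mathcal{O}$ and $\mathcal{O}_{s}$. The key elementary fact is: a finite-dimensional composition algebra $(A,*,n)$ is a division algebra --- i.e. every left and right multiplication $L_{a}\colon x\mapsto a*x$ and $R_{a}\colon x\mapsto x*a$ is bijective for $a\neq0$ --- exactly when $n$ is anisotropic, and it has nontrivial zero divisors as soon as $n$ is isotropic. One direction is clear: if $n(a)\neq0$ and $a*y=0$ then $n(a)n(y)=0$, hence $n(y)=0$, hence $y=0$ by anisotropy, so $L_{a}$ (and likewise $R_{a}$) is injective, and an injective endomorphism of a finite-dimensional space is bijective. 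For the converse, pick $z\neq0$ with $n(z)=0$; then $n(z*y)=n(z)n(y)=0$ for all $y$, so $\text{im}(L_{z})$ is a linear subspace contained in $\{w\in A:n(w)=0\}$, which is a proper subset of $A$ because $n(e)=1$. Hence $L_{z}$ is not surjective, so not injective, and any $y\neq0$ with $z*y=0$ exhibits $z,y$ as zero divisors.

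Granted this reduction, the statement about $\mathcal{O}$ is immediate. For $i=1$ one has $\eta_{1}=\text{Id}$, so the defining condition $x^{\dagger}=\eta_{1}x\eta_{1}$ becomes $x^{\dagger}=x$ and $\mathcal{O}$ consists of the traceless Hermitian $3\times3$ matrices over $\mathbb{C}$; such an $x$ has real eigenvalues $\lambda_{1},\lambda_{2},\lambda_{3}$ with $\lambda_{1}+\lambda_{2}+\lambda_{3}=0$, so $n(x)=\tfrac{1}{6}(\lambda_{1}^{2}+\lambda_{2}^{2}+\lambda_{3}^{2})\geq0$, with equality only when $x=0$. Thus the norm of $\mathcal{O}$ is positive definite, in particular anisotropic, so $\mathcal{O}$ is a division algebra.

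For $\mathcal{O}_{s}$ (the case $i=2$, $\gamma=-1$) it suffices to exhibit one nonzero null vector. A one-line computation on the canonical basis gives $n(e)=1$, while now $\text{i}_{1}^{2}=-\text{diag}(1,1,0)$ since $\gamma=-1$, hence $n(\text{i}_{1})=\tfrac{1}{6}\text{Tr}(\text{i}_{1}^{2})=-\tfrac{1}{3}$; so the norm of $\mathcal{O}_{s}$ takes both signs and is isotropic. Explicitly, since $\langle e,\text{i}_{1}\rangle=\tfrac{1}{3}\text{Tr}(e\,\text{i}_{1})=0$, the element $z=e+\sqrt{3}\,\text{i}_{1}$ satisfies $n(z)=n(e)+\sqrt{3}\,\langle e,\text{i}_{1}\rangle+3\,n(\text{i}_{1})=1-1=0$; by the converse half of the reduction $z$ is a zero divisor, so $\mathcal{O}_{s}$ is not a division algebra. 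Since a division algebra cannot be isomorphic to an algebra with zero divisors, this also gives $\mathcal{O}\not\cong\mathcal{O}_{s}$, the announced corollary.

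The argument is elementary throughout; the only step that is not entirely automatic is the converse direction of the reduction, since $n(z*y)=0$ does not by itself force $z*y=0$ --- one really needs the surjectivity/dimension argument above (the image of a linear map into $A$ cannot be squeezed inside the null set of a norm that does not vanish identically). Everything else is eigenvalue bookkeeping for Hermitian matrices and a short trace computation in the split case; as a by-product, running the same reasoning for $R_{z}$ shows that in $\mathcal{O}_{s}$ every nonzero null vector is simultaneously a left and a right zero divisor.
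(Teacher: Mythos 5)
Your proof is correct. It shares the paper's overall strategy---everything is reduced to whether the composition norm $n$ is anisotropic---but both halves are carried out differently. For the reduction, the paper invokes the symmetric composition identity $\left(d*x\right)*d=d*\left(x*d\right)=n\left(d\right)x$, which gives ``$d$ is a zero divisor iff $n\left(d\right)=0$'' essentially in one line; you instead use only the multiplicativity $n\left(x*y\right)=n\left(x\right)n\left(y\right)$ together with rank--nullity, observing that $\text{im}\left(L_{z}\right)$ is a linear subspace trapped inside the null cone and hence proper. Your version is more general (it works in any finite-dimensional composition algebra, symmetric or not) at the cost of having to argue the converse direction by hand, which you do correctly; the paper's version exploits flexibility, which is available here and packages that converse into the same identity. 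For the isotropy question itself, the paper writes out $n$ explicitly in the eight real coordinates, obtaining a form that is visibly positive definite for $\gamma=1$ and visibly isotropic for $\gamma=-1$; you instead note that for $\gamma=1$ the elements are traceless Hermitian matrices, so $n\left(x\right)=\tfrac{1}{6}\sum\lambda_{i}^{2}$ is positive definite by the spectral theorem, and for $\gamma=-1$ you exhibit the single null vector $e+\sqrt{3}\,\text{i}_{1}$. Both computations check out (indeed $\text{Tr}\left(\text{i}_{1}^{2}\right)=-2$ and $\text{Tr}\left(e\,\text{i}_{1}\right)=0$), and your eigenvalue argument for the definite case is arguably cleaner than expanding the full quadratic form, while your explicit null vector is a little more concrete than the paper's appeal to the signature of \eqref{eq:norma-d}.
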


\begin{proof}
Suppose that $d\neq0$ is a non-trivial left or right divisor of zero,
i.e. $d*x=0$ or $x*d=0$, then from 

\begin{align}
\left(d*x\right)*d= & d*\left(x*d\right)=0=n\left(d\right)x,
\end{align}
we have that $d$ is a divisor of zero if and only if $n\left(d\right)=0$,
i.e. $\text{Tr}\left(d^{2}\right)=0$. But the element $d$ is of
the form 
\begin{equation}
d=\left(\begin{array}{ccc}
\xi_{1} & x_{1}+\text{i}\gamma y_{1} & x_{2}+\text{i}\gamma y_{2}\\
\gamma x_{1}-\text{i}y_{1} & \xi_{2} & x_{3}+\text{i}y_{3}\\
\gamma x_{2}-\text{i}y_{2} & x_{3}-\text{i}y_{3} & -\xi_{1}-\xi_{2}
\end{array}\right),
\end{equation}
where $x_{i},y_{i},\xi_{i}\in\mathbb{R}$ and where $\gamma^{2}=1$
and is $\gamma=1$ in the case of Okubo $\mathcal{O}$ while is $\gamma=-1$
in the split case $\mathcal{O}_{s}$. Therefore, the norm is 
\begin{equation}
n\left(d\right)=\frac{1}{3}\left(\gamma x_{1}^{2}+\gamma x_{2}^{2}+x_{3}^{2}+\gamma y_{1}^{2}+\gamma y_{2}^{2}+y_{3}^{2}+\xi_{1}^{2}+\xi_{2}^{2}+\xi_{1}\xi_{2}\right),\label{eq:norma-d}
\end{equation}
which yields that $\text{Tr}\left(d^{2}\right)=0$ is not possible
in the case of $\gamma=1$ , where as it is easily obtained in the
case $\gamma=-1$.
\end{proof}

\paragraph*{Automorphisms}

The automorphism group of Okubo algebras is a Lie group of $A_{2}$
type \cite{Okubo 1978,ElduQueAut}, more specifically $\text{Aut}\left(\mathcal{O}\right)\cong SU\left(3\right)$
while $\text{Aut}\left(\mathcal{O}_{s}\right)\cong SU\left(2,1\right)$
and therefore $\mathfrak{der}\left(\mathcal{O}\right)\cong\mathfrak{su}\left(3\right)$,
while $\mathfrak{der}\left(\mathcal{O}_{s}\right)\cong\mathfrak{su}\left(2,1\right).$
It is here worth noting that every automorphism also preserves the
norm. Indeed, if $\varphi$ is an automorphism of $\mathcal{O}$ then
from $\varphi\left(x*y\right)=\varphi\left(x\right)*\varphi\left(y\right)$,
we have 
\begin{align}
\varphi\left(\left(x*y\right)*x\right) & =\varphi\left(n\left(x\right)y\right),
\end{align}
but also 
\begin{align}
\left(\varphi\left(x\right)*\varphi\left(y\right)\right)*\varphi\left(x\right)=n\left(\varphi\left(x\right)\right)\varphi\left(y\right),
\end{align}
so that 
\begin{equation}
n\left(x\right)=n\left(\varphi\left(x\right)\right).
\end{equation}
The group of automorphisms is therefore a subgroup of the orthogonal
group $O\left(\mathcal{O}\right)$. Finally, from the analysis of
the norm as a function over an eight-dimensional real vector space,
it is easy to see from (\ref{eq:norma-d}) that $\text{Spin}\left(\mathcal{O}\right)=\text{Spin}\left(8\right)$
and $\text{Spin}\left(\mathcal{O}_{s}\right)=\text{Spin}\left(4,4\right)$.
From a Lie theoretical point of view, while in the case of octonions,
the automorphism $\text{Aut}\left(\mathcal{\mathbb{O}}\right)\simeq\text{G}_{2}$
is not a maximal subgroup in $\text{O}\left(8\right)$, since $\text{G}_{2}\subsetneq\text{O}\left(7\right)\subsetneq\text{O}\left(8\right)$,
in this case we are considering the maximal and non-symmetric embadding
of $\text{A}_{2}$ into $\text{D}_{4}$, such that the $8_{v}$,$8_{s}$,
and $8_{c}$ of $\text{D}_{4}$ all remain irreducible.

\paragraph*{From Michel-Radicati to Okubo }

Okubo algebras enjoy also an interesting interpretation as deformation
of the Michel-Radicati algebra which was introduced in \cite{Michel Radicati 1970}
and whose structure constants were used by Günyadin and Zagermann
to construct unified Maxwell-Einstein supergravity theories (MESGTs)
in $D=5$ space-time dimensions in \cite{G=0000FCnaydin 2003}. The
heuristic motivation for the introduction of the Michel-Radicati product
is that, given two three by three traceless Hermitian matrix over
$\mathbb{C}$, the Jordan product does not yield to a traceless matrices
and therefore traceless Hermitian matrices do not form a Jordan subalgebra
of $\mathfrak{J}_{3}\left(\mathbb{C}\right)$ with the Jordan product.
The problem is avoided defining a new product
\begin{equation}
x\star y\coloneqq\frac{1}{2}xy+\frac{1}{2}yx-\frac{1}{3}\text{tr}\left(xy\right)\text{Id},
\end{equation}
which defines the Michel-Radicati algebra (namely a consistent definition
of the traceless subalgebra of $\mathfrak{J}_{3}\left(\mathbb{C}\right)$
). Thus, the Okubo algebras $\mathcal{O}$ and $\mathcal{O}_{s}$,
might also be interpreted as specific casees of deformed Michel-Radicati
algebras with the following product
\begin{equation}
x\star_{\theta}y\coloneqq\left(\frac{1}{2}+\text{i}\theta\right)xy+\left(\frac{1}{2}-\text{i}\theta\right)yx-\frac{1}{3}\text{Tr}\left(xy\right)\text{Id},\label{eq:deformed product1}
\end{equation}
where the deformation $\theta\in\mathbb{R}$. The $\theta$-deformation
of the Michel-Radicati product enjoys numerous interesting properties,
since the Michel-Radicati algebras is always \emph{flexible}, i.e.
\begin{equation}
x\star_{\theta}\left(y\star_{\theta}x\right)=\left(x\star_{\theta}y\right)\star_{\theta}x,
\end{equation}
and\emph{ Lie-admissible}, i.e. the product $\left[x,y\right]_{\theta}=x\star_{\theta}y-y\star_{\theta}x$
yields to a Lie algebra. The peculiarity of the Okubo algebras $\mathcal{O}$
and $\mathcal{O}_{s}$, is that $\theta=\pm\nicefrac{1}{2\sqrt{3}}$
is the unique value of the Michel-Radicati deformation parameter such
that the deformed product gives rise to a \emph{composition algebra},
i.e. 
\begin{equation}
n\left(x\star_{\theta}y\right)=n\left(x\right)n\left(y\right)\,\,\,\text{iff}\,\,\,\,\theta=\pm\frac{1}{2\sqrt{3}}.
\end{equation}
Moreover, the traceful part of the deformed product (\ref{eq:deformed product1}),
i.e. 
\begin{equation}
x\circ_{\theta}y\coloneqq\left(\frac{1}{2}+\text{i}\theta\right)xy+\left(\frac{1}{2}-\text{i}\theta\right)yx,\label{eq:deformation Jordan}
\end{equation}
gives rise to a non-commutative algebra that is again \emph{Lie-admissible},
i.e. $\left[x,y\right]'_{\theta}=x\circ_{\theta}y-y\circ_{\theta}x$
defines a Lie algebra, but, even more interestingly, the $\circ_{\theta}$
product also satisfies the Jordan identity for every $\theta\in\mathbb{R}$,
thus defining a $\theta$-parametrized family of \emph{non-commutative
Jordan algebras}. Okubo algebras $\mathcal{O}$ and $\mathcal{O}_{s}$
might therefore be interpreted as the only composition algebras resulting
as the traceless sector of a non-commutative Jordan algebra obtained
by deforming the Jordan product over Hermitian matrices as in (\ref{eq:deformation Jordan}). 

Considering three by three matrices, it is amusing to note that the
algebra $\mathfrak{J}_{3}\left(\mathbb{C}\right)_{0}$ of traceless
Hermitian matrices over $\mathbb{C}$ provides the unique case in
which the algebra itself is isomorphic to the Lie algebra of its derivations.
This is actually the case $N=3$ of a general result, corresponding
to the realization of the generators of $\mathfrak{su}\left(n\right)$
as Hermitian traceless $n$ by $n$ matrices over $\mathbb{C}$, i.e.
$\mathfrak{J}_{n}\left(\mathbb{C}\right)_{0}$ (e.g. see \cite{Michel Radicati 1970,Michel 1973}).

\paragraph*{From Okubo to octonions}

From now on we will leaving aside the Okubo split algebra $\mathcal{O}_{s}$
and focus on the real division Okubo algebra $\mathcal{O}$ which
enjoys the important feature of being in a very tight relation with
octonions $\mathbb{O}$. Previously in this section, we mentioned
how Okubo algebra $\mathcal{O}$ can be considered as the Petersson
algebra obtained from the split octonion algebra $\mathbb{O}_{s}$
(cfr. \cite{Elduque Gradings SC}). On the other side, octonions $\mathbb{O}$
can be obtained as a deformation from the Okubo algebra $\mathcal{O}$
through the use of the new product 
\begin{equation}
x\cdot y\coloneqq\left(e*x\right)*\left(y*e\right),
\end{equation}
where $x,y\in\mathcal{O}$ and $e$ is an idempotent of $\mathcal{O}$,
such as the one given by (\ref{eq:idempotentElement}). Nevertheless,
it is important to stress out that the following construction is indipendent
from the explicit form of the idempotent that are, in fact, all conjugate
under the automorphism group \cite[Thm. 20]{ElduQueAut}. Following
\cite{ElduQueAut}, it is easy to show that since $e*e=e$ and $n\left(e\right)=1$,
for every $x\in\mathcal{O}$ the element $e$ acts as a left and right
identity, i.e. 
\begin{align}
x\cdot e & =e*x*e=n\left(e\right)x=x,\\
e\cdot x & =e*x*e=n\left(e\right)x=x.
\end{align}
Moreover, since the Okubo algebra is a composition algebra, the same
norm $n$ enjoys the following relation 
\begin{equation}
n\left(x\cdot y\right)=n\left(\left(e*x\right)*\left(y*e\right)\right)=n\left(x\right)n\left(y\right),
\end{equation}
which means that $\left(\mathcal{O},\cdot,n\right)$ is a unital composition
algebra of real dimension $8$ and, being also a division algebra,
is therefore isomorphic to the algebra of octonions $\mathbb{O}$,
as noted by Okubo himself \cite{Okubo 1978,Okubo 78c}. 

\subsection*{Trivolution}

Even though Okubo algebra $\mathcal{O}$ is not unital, some interesting
features of the unity (albeit not all) are recovered by the use of
an idempotent $e$. While in unital composition algebras the canonical
conjugation, i.e.
\begin{equation}
\overline{x}=\left\langle x,1\right\rangle 1-x,\label{eq:coniugazione}
\end{equation}
has the notable properties of $x\cdot\overline{x}=n\left(x\right)1$
and of $\overline{x\cdot y}=\overline{y}\cdot\overline{x},$ in Okubo
algebras the map $x\longrightarrow\left\langle x,e\right\rangle e-x$
is a map of order two but it is not an automorphism, nor an anti-automorphism.
Similarly, in a para-Hurwitz algebra, which is a non-unital algebra
but rather it is endowed with a paraunit $e$, the maps 
\begin{align}
x & \longrightarrow L_{e}\left(x\right)=e*x,\label{eq:azione a sinistra}\\
x & \longrightarrow R_{e}\left(x\right)=x*e,\label{eq:azioni a destra}
\end{align}
are a conjugation since $e*x=\overline{x}$ and 
\begin{align}
x*L_{e}\left(x\right) & =n\left(x\right)e,\\
R_{e}\left(x\right)*x & =n\left(x\right)e.
\end{align}
On the other hand, in the Okubo algebra $\mathcal{O}$, even though
we still have $R_{e}\circ L_{e}=\text{id}$, both $L_{e}$ and $R_{e}$
are in fact neither are automorphism nor an anti-automorphism. Even
though it is not possible to have a conjugation over Okubo algebra
with the desired properties, we can define something in a similar
fashion such as an order-three automorphism $\tau$, that we will
call here a\emph{ trivolution}, and that can be defined as 
\begin{equation}
x\longrightarrow\tau\left(x\right)\coloneqq\left\langle x,e\right\rangle e-x*e,
\end{equation}
or equivalently as
\begin{align}
x & \longrightarrow\tau\left(x\right)\coloneqq L_{e}^{2}\left(x\right)=e*\left(e*x\right),\label{eq:tau-idempotent}\\
x & \longrightarrow\tau^{2}\left(x\right)\coloneqq R_{e}^{2}\left(x\right)=\left(x*e\right)*e.
\end{align}
It is easy to see that the automorphism $\tau$ is is of order $3$
since, applying flexibility, $R_{e}^{2}\circ L_{e}^{2}=\text{id}$.
It is also worth noting the stunning analogy with the conjugation
expressed for unital composition algebra in (\ref{eq:coniugazione})
and at the same time the analogy with the one expressed for para-Hurwitz
algebras in (\ref{eq:azione a sinistra}). Finally, we have to stress
out that (\ref{eq:tau-idempotent}) shows the deep relation between
the idempotent $e$ and the trivolution $\tau$ : in a certain sense
the choice of the idempotent $e$ and the trivolution $\tau$ are
equivalent (e.g. see \cite[Prop. 18]{ElduQueAut}).
\begin{rem}
It is worth noting that the idempotent $e$ and the order-three automorphism
$\tau$ turn explicit the construction of octonions $\mathbb{O}$
from Okubo algebra $\mathcal{O}$ in an elegant way allowing the definition
of the octonionic product and conjugation, respectively defined by
\begin{align}
x\cdot y & \coloneqq L_{e}\left(x\right)*R_{e}\left(y\right),\\
\overline{x} & \coloneqq L_{e}^{3}\left(x\right)=e*\left(\tau\left(x\right)\right).
\end{align}
\end{rem}

The fixed part of the trivolution is given by 
\begin{equation}
\text{Fix}_{\tau}\left(x\right)\coloneqq\frac{1}{3}\left(x+\tau\left(x\right)+\tau^{2}\left(x\right)\right)=\left(\begin{array}{ccc}
\xi_{1} & 0 & 0\\
0 & \xi_{2} & x_{3}+\text{i}y_{3}\\
0 & x_{3}-\text{i}y_{3} & -\xi_{1}-\xi_{2}
\end{array}\right),\label{eq:fixTau}
\end{equation}
while $\tau$ act on the remaining part $\widetilde{x}=x-\text{Fix}_{\tau}\left(x\right)$
of the element as 
\begin{equation}
\widetilde{x}\longrightarrow\left(\begin{array}{ccc}
0 & -\overline{\alpha}\left(x_{1}+\text{i}y_{1}\right) & -\overline{\alpha}\left(x_{2}+\text{i}y_{2}\right)\\
\alpha\left(x_{1}-\text{i}y_{1}\right) & 0 & 0\\
\alpha\left(x_{2}-\text{i}y_{2}\right) & 0 & 0
\end{array}\right),\label{eq:spinors}
\end{equation}
where $\alpha=\left(1+\text{i}\sqrt{3}\right)/2$ and $x_{1},x_{2},x_{3},y_{1},y_{2},y_{3}\in\mathbb{R}$.
It might be of interest to note that the r.h.s. of (\ref{eq:fixTau})
corresponds to $\left(\mathbb{R}\oplus\mathfrak{J}_{2}\left(\mathbb{C}\right)\right)_{0}\simeq\mathfrak{J}_{2}\left(\mathbb{C}\right)$,
i.e. the simple Jordan algebra of rank-2 over $\mathbb{C}$, given
by two by two Hermitian matrices over $\mathbb{C}$.
\begin{rem}
It is worth remarking that $\mathcal{O}$ is a Lie-admissible algebra,
i.e. $\mathcal{O}$ is a Lie algebra, namely $\mathfrak{su}\left(3\right)$,
if endowed with the commutator $\left[x,y\right]=x*y-y*x$, and that
$\tau$ realizes a $\mathbb{Z}_{2}$-grading of the Lie algebra, i.e.
$\mathcal{O}=\mathfrak{g}_{\overline{0}}\oplus\mathfrak{g}_{\overline{1}},$
where $\mathfrak{g}_{\overline{0}}=\left\{ x\in\mathcal{O};x=\tau\left(x\right)\right\} $
is the Lie subalgebra of elements fixed by $\tau$ and $\mathfrak{g}_{\overline{1}}$
is the complement, i.e. the real vector space generated by elements
of the form (\ref{eq:spinors}). The $\mathbb{Z}_{2}$-grading of
the algebra means that, i.e. $v*v',s\ast s'\in\mathfrak{g}_{\overline{0}}$,
and $s*v,v*s\in\mathfrak{g}_{\overline{1}}$, for every $v,v'\in\mathfrak{g}_{\overline{0}}$
and $s,s'\in\mathfrak{g}_{\overline{1}}$, which obviously extends
to a grading of the Lie algebra such that $\left[v,v'\right],\left[s,s'\right]\in\mathfrak{g}_{\overline{0}}$,
and $\left[s,v\right]\in\mathfrak{g}_{\overline{1}}$.
\end{rem}

\section{The Okubic line}

If in the previous sections we were often dealing with both Okubo
algebra $\mathcal{O}$ and split-Okubo algebra $\mathcal{O}_{s}$,
it is now important to stress out that all the treatment we will develop
from now on, it holds for the Okubo algebra only, and cannot be extended
in a straightforward way to the split-Okubo algebra, which is not
division algebra (cfr. Prop.1). Extensions of the subsequent treatment
to split-Okubo algebra will be considered in future works but momentarily
left aside.

We now construct the Okubic projective line starting from an extended
quadric over $\mathcal{O}$. With this in mind, we start with a real
vector space $\widehat{\mathcal{O}}\cong\mathcal{O}\times\mathbb{R}\times\mathbb{R}$
and a quadratic form $b$ from $\widehat{\mathcal{O}}$ to $\mathbb{R}$
defined as 
\begin{equation}
b\left(x,\xi_{1},\xi_{2}\right)=n\left(x\right)-\xi_{1}\xi_{2},
\end{equation}
which is an extension of $n$ over $\widehat{\mathcal{O}}$. We then
consider the quadric 
\begin{equation}
Q=\left\{ \mathbb{R}v\,\,;\,\,v\in\widehat{\mathcal{O}}\setminus\left\{ 0\right\} ,\,\,\,b\left(v\right)=0\right\} ,
\end{equation}
that obiously lies in $P\mathbb{R}^{9}$ and, since it is closed,
is therefore compact. Thus, to be explicit, points on the Okubic projective
line are elements of the form $\mathbb{R}\left(x,\xi_{1},\xi_{2}\right)$
where $x\in\mathcal{O}$, $\xi_{1},\xi_{2}\in\mathbb{R}$ fulfilling
$n\left(x\right)-\xi_{1}\xi_{2}=0$. Our aim now is to show that this
compact set is a one-point compactification of the Okubic algebra.
This proves the following
\begin{prop}
The map from $\mathcal{O}\cup\left\{ \infty\right\} $ to $Q$ given
by
\begin{align}
x & \mapsto\mathbb{R}\left(x,n\left(x\right),1\right),\\
\infty & \mapsto\mathbb{R}\left(0,1,0\right),
\end{align}
is an homeomorphism, showing that $Q$ is homeomorphic to the one-point
compactification of the Okubo algebra $\mathcal{O}$. 
\end{prop}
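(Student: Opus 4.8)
The plan is to show that the stated map, which I denote $\Phi\colon\mathcal{O}\cup\{\infty\}\to Q$, is a continuous bijection from a compact space onto a Hausdorff space, and then to conclude by the elementary fact that any such map is automatically a homeomorphism. First I would check that $\Phi$ indeed lands in $Q$: for $x\in\mathcal{O}$ one has $b\left(x,n\left(x\right),1\right)=n\left(x\right)-n\left(x\right)=0$, the representative $\left(x,n\left(x\right),1\right)$ being nonzero since its last entry is $1$; and $b\left(0,1,0\right)=n\left(0\right)-0=0$ with $\left(0,1,0\right)\neq 0$. Bijectivity is then straightforward. For injectivity: if $\mathbb{R}\left(x,n\left(x\right),1\right)=\mathbb{R}\left(y,n\left(y\right),1\right)$, comparing the last coordinates of proportional representatives forces the scalar to equal $1$, hence $x=y$, and no $\Phi\left(x\right)$ with $x\in\mathcal{O}$ can equal $\mathbb{R}\left(0,1,0\right)$ because the last coordinate is $1$ in one case and $0$ in the other. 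For surjectivity: given $\mathbb{R}\left(x,\xi_{1},\xi_{2}\right)\in Q$, so $n\left(x\right)=\xi_{1}\xi_{2}$, if $\xi_{2}\neq 0$ I rescale the representative so that $\xi_{2}=1$, whence $\xi_{1}=n\left(x\right)$ and the point is $\Phi\left(x\right)$; if $\xi_{2}=0$ then $n\left(x\right)=0$, and here I invoke that $\mathcal{O}$ is a division algebra --- equivalently that $n$ is positive definite, as read off from (\ref{eq:norma-d}) with $\gamma=1$ --- to conclude $x=0$, so that the point is $\mathbb{R}\left(0,\xi_{1},0\right)=\mathbb{R}\left(0,1,0\right)=\Phi\left(\infty\right)$ (with $\xi_{1}\neq 0$ keeping the representative nonzero).

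Next I would establish continuity. On the open chart $\mathcal{O}$ the map factors as $x\mapsto\left(x,n\left(x\right),1\right)\in\widehat{\mathcal{O}}\setminus\{0\}$, which is continuous because $n$ is a polynomial, followed by the canonical projection $\widehat{\mathcal{O}}\setminus\{0\}\to P\mathbb{R}^{9}$, which is continuous by definition of the quotient topology; hence $\Phi$ is continuous on $\mathcal{O}$. Continuity at $\infty$ is the point that needs care. Since a basis of neighbourhoods of $\infty$ in the one-point compactification is given by the sets $\left(\mathcal{O}\setminus K\right)\cup\{\infty\}$ with $K\subset\mathcal{O}$ compact, it suffices to prove that $\Phi\left(x\right)\to\Phi\left(\infty\right)$ as $n\left(x\right)\to\infty$. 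For $x\neq 0$ I rescale the representative, $\mathbb{R}\left(x,n\left(x\right),1\right)=\mathbb{R}\left(n\left(x\right)^{-1}x,\,1,\,n\left(x\right)^{-1}\right)$, and note that, $n$ being a positive definite quadratic form, it is comparable to the Euclidean norm on $\mathcal{O}\cong\mathbb{R}^{8}$, so that $n\left(x\right)^{-1}x\to 0$ and $n\left(x\right)^{-1}\to 0$ as $n\left(x\right)\to\infty$. Therefore $\left(n\left(x\right)^{-1}x,1,n\left(x\right)^{-1}\right)\to\left(0,1,0\right)$ in $\widehat{\mathcal{O}}$, and the corresponding points of $P\mathbb{R}^{9}$ converge to $\mathbb{R}\left(0,1,0\right)=\Phi\left(\infty\right)$, which is precisely continuity at $\infty$.

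To finish, I would observe that $\mathcal{O}\cup\{\infty\}$ is compact, being the one-point compactification of the locally compact Hausdorff space $\mathcal{O}\cong\mathbb{R}^{8}$, while $Q$ is Hausdorff as a subspace of the real projective space $P\mathbb{R}^{9}$; a continuous bijection from a compact space to a Hausdorff space carries closed (hence compact) sets to compact (hence closed) sets, so it is closed, and a closed continuous bijection is a homeomorphism. I expect the continuity at $\infty$ to be the only genuinely delicate step --- matching the topology of the one-point compactification with the asymptotic behaviour of the representative $\left(x,n\left(x\right),1\right)$ as $n\left(x\right)\to\infty$ --- while every other verification is routine.
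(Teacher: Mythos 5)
Your proof is correct, and it is in fact more careful than the paper's own argument, though it reaches the conclusion by a different topological route. The paper declares the forward map ``obviously continuous'' and then concentrates on the inverse, checking only that on the affine chart $\left\{ \xi_{2}\neq0\right\} $ the assignment $\mathbb{R}\left(x,n\left(x\right),1\right)\mapsto x$ is continuous; continuity of the forward map at $\infty$, and of the inverse at the point $\mathbb{R}\left(0,1,0\right)$, are left implicit. You instead carry out the genuinely delicate verification --- continuity at $\infty$, via the rescaled representative $\left(n\left(x\right)^{-1}x,1,n\left(x\right)^{-1}\right)\to\left(0,1,0\right)$, which uses exactly the positive definiteness of $n$ guaranteed by (\ref{eq:norma-d}) with $\gamma=1$ --- and then obtain continuity of the inverse for free from the fact that a continuous bijection from a compact space to a Hausdorff space is a homeomorphism. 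Both arguments rest on the same algebraic input, namely that $n$ is anisotropic on the division algebra $\mathcal{O}$, so that $\mathbb{R}\left(0,1,0\right)$ is the only point of $Q$ lying on the hyperplane $\xi_{2}=0$; what your route buys is that the one potentially tricky direction of the topology is handled explicitly and the other is automatic, whereas the paper's route would in principle require two separate local arguments near the point at infinity that it does not spell out.
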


\begin{proof}
Since the map is obviously continuous it suffices to see that also
the inverse is continuous. First of all we need to notice that, since
$\mathcal{O}$ is a division algebra, $n\left(x\right)=0$ if and
only if $x=0$ and therefore if $\xi_{2}=0$ then the only $\xi_{1}$
for which a vector $v\in\widehat{\mathcal{O}}\setminus\left\{ 0\right\} $
is such that $b\left(v\right)=0$ is for multiple of $\xi_{1}=1$.
This means that $\mathbb{R}\left(0,1,0\right)$ is the only point
of the quadric that passes through the hyperplane $H=\left\{ \xi_{2}=0\right\} $
in $P\mathbb{R}^{9}$. This means that the image of all $x\in\mathcal{O}$
are in $P\mathbb{R}^{9}\setminus H$, which is homeomorphic to $\mathbb{R}^{9}$.
Moreover if $\mathbb{R}\left(x,\xi_{1},1\right)$ is in the quadric
$Q$ then $n\left(x\right)-\xi_{1}=0$, i.e. $\xi_{1}=n\left(x\right)$,
and therefore the map from $\mathbb{R}\left(x,n\left(x\right),1\right)\mapsto x$
is a continuous map.
\end{proof}

\section{The affine and projective plane over the Okubo algebra}
\begin{center}
\begin{figure}
\centering{}\includegraphics[scale=0.3]{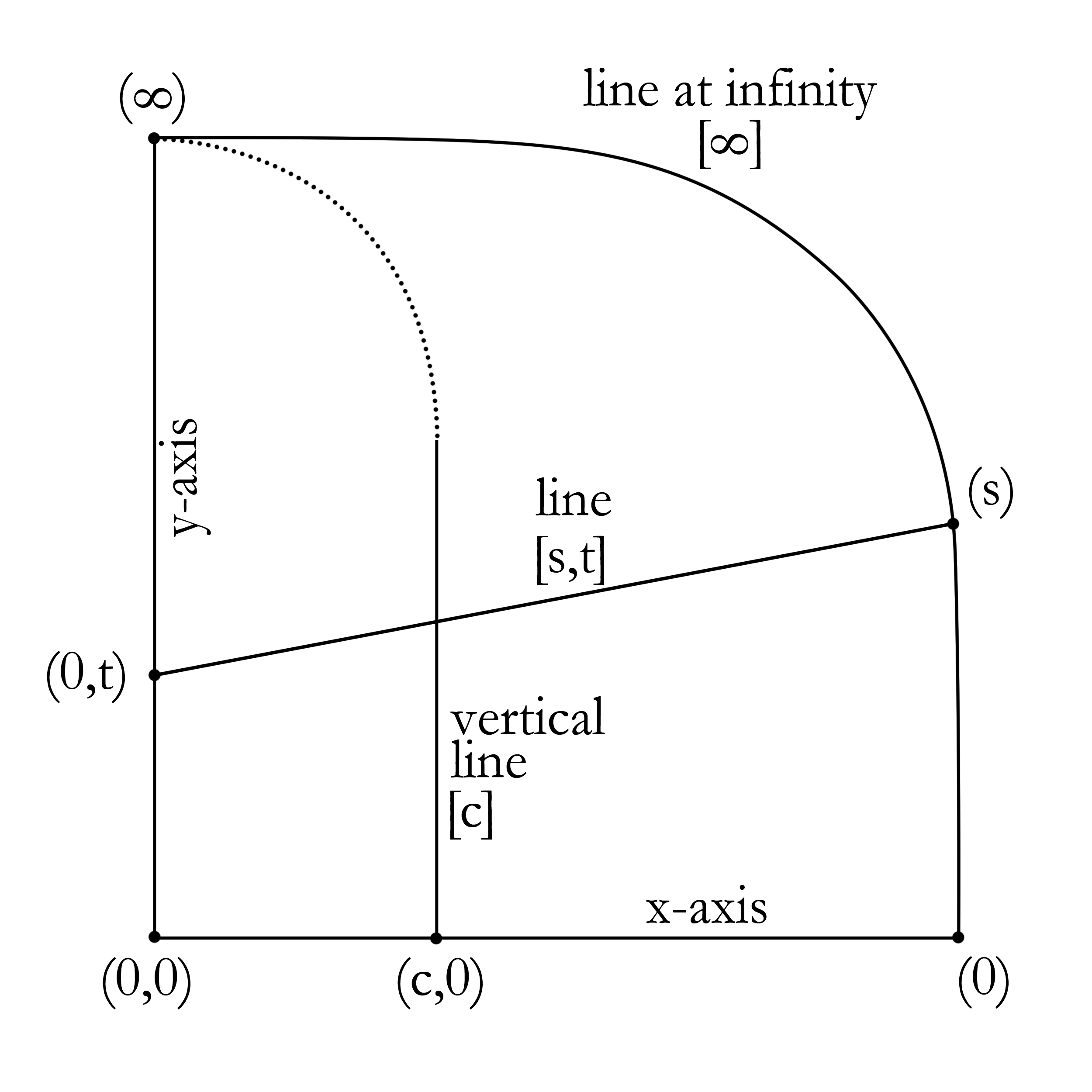}\caption{\label{fig:The affine plane}Representation of the affine plane: $\left(0,0\right)$
represents the origin, $\left(0\right)$ the point at the infinity
on the $x$-axis, $\left(s\right)$ is the point at infinity of the
line $\left[s,t\right]$ of slope $s$ while $\left(\infty\right)$
is the point at the infinity on the $y$-axis and of vertical lines
$\left[c\right]$.}
\end{figure}
\par\end{center}

Here we define the affine and the projective plane following section
3 and 4 of our previous work \cite{Corr-OkuboSpin}. The Okubic projective
plane can be equivalently defined as completion of the affine Okubic
plane or directly through the use of a modification of the Veronese
coordinates. We here present both constructions and we also determine
the maps that realizes the isomorphisms between the two. A straightforward
proof of the isomorphism between the two planes and of the preservation
through the map of incidence properties of points and lines can be
found in \cite[Sec. 4]{Corr-OkuboSpin}, here we will just present
the results in order to set everything for the next two sections.

\subsection*{The Okubic affine plane. }

The definition of the Okubic affine plane $\mathscr{A}_{2}\left(\mathcal{O}\right)$
is easily achieved by defining a \emph{point} on the affine plane
as the two coordinates $\left(x,y\right)$ with $x,y\in\mathcal{O}$,
and a \emph{line} of the affine plane as $\left[s,t\right]$ with
$s,t\in\mathcal{O}$ where we intended $\left[s,t\right]$ as the
set of Okubic elements $\left\{ \left(x,s*x+t\right):x\in\mathcal{O}\right\} .$
Lines on the Okubic plane are therefore determined by two Okubic elements
$s$ and $t$ that are called the \emph{slope} and the \emph{offset}
of the line, respectively. Vertical lines are identified by $\left[c\right]$
that denotes the set $\left\{ c\right\} \times\mathcal{O}$, where
$c\in\mathcal{O}$ represents the intersection of the line with the
$x$ axis. It is easily seen that since $\mathcal{O}$ is a division
algebra the Okubic affine plane satisfies all incidence axioms of
the affine geometry \cite[Sec. 3]{Corr-OkuboSpin}. 

The completion of the Okubic affine plane $\overline{\mathscr{A}_{2}}\left(\mathcal{O}\right)$
is obtained adding a line at infinity $\left[\infty\right]$, i.e.
\[
\left[\infty\right]=\left\{ \left(s\right):s\in\mathcal{O}\cup\left\{ \infty\right\} \right\} ,
\]
where $\left(s\right)$ identifies the end point at infinity of a
line with slope $s\in\mathcal{O}\cup\left\{ \infty\right\} $. Finally,
we define $\left(\infty\right)$ as the point at infinity of $\left[\infty\right]$.
It is easy to verify that this construction preserves the property
of a unique line joining two different points and that two lines intersect
at infinity if and only if they are parallel, i.e. have the same slope
$\left(s\right)$. Resuming the whole notation, as in Fig.\ref{fig:The affine plane},
we have three set of coordinates that indentify all the points in
the completion of the affine plane, i.e. $\left(x,y\right),\left(s\right)$
and $\left(\infty\right)$. 

\subsection*{The Okubic projective plane.}

Resuming our construction in \cite{Corr-OkuboSpin}, inspired by \cite{Compact Projective},
let $V\cong\mathbb{\mathcal{O}}^{3}\times\mathbb{R}^{3}$ be a real
vector space, with elements of the form 
\[
\left(x_{\nu};\lambda_{\nu}\right)_{\nu}=\left(x_{0},x_{1},x_{2};\lambda_{0},\lambda_{1},\lambda_{2}\right)
\]
where $x_{\nu}\in\mathcal{O}$, $\lambda_{\nu}\in\mathbb{R}$ and
$\nu=0,1,2$. The vector $w\in V$ is called \emph{Okubo-Veronese}
if

\begin{align}
\lambda_{0}x_{0} & =x_{1}*x_{2},\,\,\lambda_{1}x_{1}=x_{2}*x_{0},\,\,\lambda_{2}x_{2}=x_{0}*x_{1}\label{eq:Ver-1-1}\\
n\left(x_{0}\right) & =\lambda_{1}\lambda_{2},\,n\left(x_{1}\right)=\lambda_{2}\lambda_{0},n\left(x_{2}\right)=\lambda_{0}\lambda_{1}.\label{eq:Ver-2-1}
\end{align}
Now we consider the subspace $H\subset V$ be of Okubo-Veronese vectors.
It is straightforward to see that if $w=\left(x_{\nu};\lambda_{\nu}\right)_{\nu}$
is an Okubo-Veronese vector then also $\mu w=\mu\left(x_{\nu};\lambda_{\nu}\right)_{\nu}$
is such a vector for every $\mu\in\mathbb{R}$, and, therefore, $\mathbb{R}w\subset H$.
We define the \emph{Okubic projective plane} $\mathbb{P}^{2}\mathcal{O}$
as the geometry having this 1-dimensional subspaces $\mathbb{R}w$
as points, i.e. 
\begin{equation}
\mathbb{P}^{2}\mathcal{O}\coloneqq\left\{ \mathbb{R}w:w\in H\setminus\left\{ 0\right\} \right\} .\label{eq:punti piano pro}
\end{equation}

On the projective plane $\mathbb{P}^{2}\mathcal{O}$ lines are defined
through the use of a symmetric bilinear form $\beta$ that is the
extension of the polar form (\ref{eq:linearization}) of the quadratic
norm $n$ over the Okubo algebra $\mathcal{O}$, i.e.
\begin{equation}
\beta\left(v,w\right)\coloneqq\stackrel[\nu=0]{2}{\sum}\left(\left\langle x_{\nu},y_{\nu}\right\rangle +\lambda_{\nu}\eta_{\nu}\right),
\end{equation}
where $v=\left(x_{\nu};\lambda_{\nu}\right)_{\nu}$ and $w=\left(y_{\nu};\eta_{\nu}\right)_{\nu}$
are Okubo-Veronese vectors in $H\subset V\cong\mathbb{\mathcal{O}}^{3}\times\mathbb{R}^{3}$.
The \emph{lines} $\ell_{w}$ in the projective plane $\mathbb{P}^{2}\mathcal{O}$
are the orthogonal spaces of a vector $w\in H$, i.e. 
\begin{equation}
\ell_{w}\coloneqq w^{\bot}=\left\{ z\in V:\beta\left(z,w\right)=0\right\} ,
\end{equation}
and, clearly, a point $\mathbb{R}v$ is incident to the line $\ell_{w}$
when $\mathbb{R}v\subseteq$$w^{\bot}$. 
\begin{rem}
It is worth expliciting how the norm $n$ defined over the symmetric
composition algebra $\mathbb{\mathcal{O}}$ is intertwined with the
geometry of the plane. This relations is manifest when we consider
the quadratic form of the bilinear symmetric form $\beta$, i.e.

\begin{equation}
\left\Vert v\right\Vert \coloneqq\beta\left(v,v\right)=2n\left(x_{0}\right)+2n\left(x_{1}\right)+2n\left(x_{2}\right)+\lambda_{0}^{2}+\lambda_{1}^{2}+\lambda_{2}^{2},\label{eq: norm of an element}
\end{equation}
\end{rem}

\subsection*{Correspondence between projective and affine plane}

The identification of the affine Okubic plane with the projective
one can be explicited defining the map that sends a point of the affine
plane to the projective point in $V\cong\mathbb{\mathcal{O}}^{3}\times\mathbb{R}^{3}$,
i.e. 
\begin{align}
\left(x,y\right) & \mapsto\mathbb{R}\left(x,y,x*y;n\left(y\right),n\left(x\right),1\right),\label{eq:corrispondenza1}\\
\left(x\right) & \mapsto\mathbb{R}\left(0,0,x;n\left(x\right),1,0\right),\\
\left(\infty\right) & \mapsto\mathbb{R}\left(0,0,0;1,0,0\right).
\end{align}
Since the Okubo algebra is a symmetric composition algebra, then the
map is well defined. Indeed, from (\ref{eq:Ver-2-1}) we note that
\begin{equation}
n\left(x\right)=\lambda_{1},\,\,\,n\left(y\right)=\lambda_{0},
\end{equation}
and since Okubo is a composition algebra then 
\begin{equation}
n\left(x*y\right)=n\left(x\right)n\left(y\right).
\end{equation}
Since Okubo algebra is flexible we also have that (\ref{eq:Ver-1-1})
are satisfied and 
\begin{align*}
\lambda_{0}x & =y*\left(x*y\right)=n\left(y\right)x,\,\,\\
\lambda_{1}y & =\left(x*y\right)*x=n\left(x\right)y,\\
\lambda_{2}\left(x*y\right) & =x*y,
\end{align*}
and therefore $\mathbb{R}\left(x,y,x*y;n\left(y\right),n\left(x\right),1\right)$
is a point in the Okubic projective plane. As for the converse, if
a point $p$ of coordinates $\left(x_{\nu};\lambda_{\nu}\right)_{\nu}$
is in $\mathbb{P}^{2}\mathcal{O}$ then it satisfies (\ref{eq:Ver-2-1})
and it has one of the $\lambda_{\nu}$ different from zero. 

In \cite[Sec. 4]{Corr-OkuboSpin} we have shown that the map (\ref{eq:corrispondenza1})
that the correspondence is a bijection that sends affine points incident
to an affine line into projective points incident to the same projective
line. 

\section{A deformation of the Okubic Albert algebra }

Since there is a well known geometric correspondence \cite{corr Notes Octo}
between rank-one idempotents of a simple rank-three Jordan algebra
$\mathfrak{J}_{3}\left(\mathbb{K}\right)$ over a Hurwitz algebras
$\mathbb{K}\in\left\{ \mathbb{R},\mathbb{C},\mathbb{H},\mathbb{O}\right\} $
and elements of the projective planes $\mathbb{K}P^{2}$ that was
introduced by Jordan, Von Neumann and Wigner \cite{Jordan} in 1934,
it thus appears logical to investigate the Jordan algebras of rank
three over the Okubo algebra that can be potentially linked to the
Okubic projective plane introduced above. Great work in this sense
was done by Elduque who first introduced an equivalent of Tits-Freudenthal
Magic Square for flexible composition algebras in \cite{EldMS1,EldMS2}
and later on defined an \emph{Okubic Albert algebra} \cite{Elduque Gradings SC}
which he found to be isomorphic to the exceptional Jordan algebra
over split-octonions $\mathfrak{J}_{3}\left(\mathbb{O}_{s}\right)$
and could be therefore used to define a $\mathbb{Z}_{3}$-grading
over the exceptional Lie algebra $\mathfrak{f}_{4}$. 

We now introduce a slight variation of the \emph{Okubic Albert algebra}
introduced by Elduque in \cite[Theo 5.15]{ElDuque Comp}. Let be $\mathbb{A}_{q}\left(\mathcal{O}\right)$
the vector space $\mathcal{O}^{3}\oplus\mathbb{R}^{3}$ equipped with
the following commutative product

\begin{equation}
\left(x_{\nu};\lambda_{\nu}\right)\circ\left(y_{\eta};\mu_{\eta}\right)\coloneqq\left(\begin{array}{c}
\frac{\lambda_{1}+\lambda_{2}}{2}y_{0}+\frac{\mu_{1}+\mu_{2}}{2}x_{0}+q\left(x_{1}*y_{2}+y_{1}*x_{2}\right)\\
\frac{\lambda_{0}+\lambda_{2}}{2}y_{1}+\frac{\mu_{0}+\mu_{2}}{2}x_{1}+q\left(x_{2}*y_{0}+y_{2}*x_{0}\right)\\
\frac{\lambda_{0}+\lambda_{1}}{2}y_{2}+\frac{\mu_{0}+\mu_{1}}{2}x_{2}+q\left(x_{0}*y_{1}+y_{0}*x_{1}\right)\\
\lambda_{0}\mu_{0}+\left\langle x_{1},y_{1}\right\rangle +\left\langle x_{2},y_{2}\right\rangle \\
\lambda_{1}\mu_{1}+\left\langle x_{0},y_{0}\right\rangle +\left\langle x_{2},y_{2}\right\rangle \\
\lambda_{2}\mu_{2}+\left\langle x_{0},y_{0}\right\rangle +\left\langle x_{1},y_{1}\right\rangle 
\end{array}\right),\label{eq:JordanElduque}
\end{equation}
where $\nu,\eta=1,2,3$ and $\left\langle x,x\right\rangle =n\left(x\right)$.
Setting $q=1$ we recover the \emph{Okubic Albert Algebra} $\mathbb{A}\left(\mathcal{O}\right)$
introduced by Elduque which is a Jordan Algebra that it was shown
to be isomorphic to $\mathfrak{J}_{3}\left(\mathbb{O}_{s}\right)$.
We also notice, for the first time to our knowledge, that also $q=-1$
gives a Jordan Algebra, that we would expect to be isomorphic to the
exceptional Jordan algebra $\mathfrak{J}_{3}\left(\mathbb{O}\right)$,
even though we leave a full investigation on the matter for a future
article. For the purposes of this article, we will set $q=\nicefrac{1}{2}$,
which gives a new algebra $\mathbb{A}_{1/2}\left(\mathcal{O}\right)$
which is unital, with unit $\left(0,0,0;1,1,1\right)$, commutative
and flexible but is neither alternative nor Jordan.

It is worth noting that using the correspondence
\begin{equation}
\left(x_{\nu};\lambda_{\nu}\right)\longrightarrow\left(\begin{array}{ccc}
\lambda_{0} & x_{2} & \overline{x}_{1}\\
\overline{x}_{2} & \lambda_{1} & x_{0}\\
x_{1} & \overline{x}_{0} & \lambda_{2}
\end{array}\right),
\end{equation}
the product in (\ref{eq:JordanElduque}) give rise to the usual Jordan
algebras $\mathfrak{J}_{3}\left(\mathbb{\mathbb{K}}\right)$ when
$\mathbb{K}$ is an Hurwitz algebra (see \cite[sec. 5.3]{Elduque Gradings SC}).
It is therefore natural to define a linear, a quadratic and a cubic
norm on the deformed Okubic Albert algebra $\mathbb{A}_{1/2}\left(\mathcal{O}\right)$
, i.e. the \emph{trace} of an element as the linear norm 
\begin{equation}
\text{Tr}\left(\left(x_{\nu};\lambda_{\nu}\right)\right)\coloneqq\lambda_{0}+\lambda_{1}+\lambda_{2},
\end{equation}
the quadratic norm or simply the \emph{norm} of an element, i.e. 
\begin{equation}
\left\Vert \left(x_{\nu};\lambda_{\nu}\right)\right\Vert \coloneqq2n\left(x_{0}\right)+2n\left(x_{1}\right)+2n\left(x_{2}\right)+\lambda_{0}^{2}+\lambda_{1}^{2}+\lambda_{2}^{2},
\end{equation}
which in addition to being an extension to the Okubic algebra of the
definition in the Hurwitz case, is also consistent with (\ref{eq: norm of an element}).
From the norm we obtain its polarization, i.e. the inner product given
by 
\begin{equation}
\left\langle \left(x_{\nu};\lambda_{\nu}\right),\left(y_{\nu};\mu_{\nu}\right)\right\rangle _{\mathbb{A}}\coloneqq\left\Vert \left(x_{\nu}+y_{\nu};\lambda_{\nu}+\mu_{\nu}\right)\right\Vert -\left\Vert \left(x_{\nu};\lambda_{\nu}\right)\right\Vert -\left\Vert \left(y_{\nu};\mu_{\nu}\right)\right\Vert .
\end{equation}
Finally, we introduce the \emph{cubic norm} $N$ of an element, i.e.
\begin{align}
N\left(\left(x_{\nu};\lambda_{\nu}\right)\right) & \coloneqq\lambda_{0}\lambda_{1}\lambda_{2}-\left(\lambda_{0}n\left(x_{0}\right)+\lambda_{1}n\left(x_{1}\right)+\lambda_{2}n\left(x_{2}\right)\right)+2\left\langle \left(x_{0}*e\right)*\left(x_{1}*x_{2}\right),e\right\rangle 
\end{align}
where $n\left(x\right)$ is the Okubic norm of $x$ and $e$ is the
idempotent element defined in (\ref{eq:idempotentElement}). 

In analogy with the $\mathfrak{J}_{3}\left(\mathbb{\mathbb{K}}\right)$
case, we will define an idempotent element of $\mathbb{A}_{1/2}\left(\mathcal{O}\right)$
to be \emph{rank-1}, an idempotent element that has cubic norm $N\left(\left(x_{\nu};\lambda_{\nu}\right)\right)=0$
and $\text{Tr}\left(\left(x_{\nu};\lambda_{\nu}\right)\right)=1$.
Indeed, it is well known that on  idempotent elements $e$ of $\mathfrak{J}_{3}\left(\mathbb{\mathbb{K}}\right)$
we have that $\text{Tr}\left(e\right)=\text{rank}\left(e\right)$
thus rank-1 idempotents are those and only those with trace equal
to one (e.g. see \cite[Note 16.8]{Compact Projective}).

\subsection*{Correspondence with the Okubic projective plane}

Even though $\mathbb{A}_{1/2}\left(\mathcal{O}\right)$ is not a Jordan
algebra, it yet retains the geometrical analogy with $\mathfrak{J}_{3}\left(\mathbb{K}\right)$
when $\mathbb{\mathbb{K}}\in\left\{ \mathbb{R},\mathbb{C},\mathbb{H},\mathbb{O}\right\} $
since the condition of idempotency on rank-1 elements, i.e. $\left(x_{\nu};\lambda_{\nu}\right)\circ\left(x_{\nu};\lambda_{\nu}\right)=\left(x_{\nu};\lambda_{\nu}\right)$,
$N\left(\left(x_{\nu};\lambda_{\nu}\right)\right)=0$ and $Tr\left(\left(x_{\nu};\lambda_{\nu}\right)\right)=1$,
yields to
\begin{align}
\left(\begin{array}{c}
\left(1-\lambda_{0}\right)x_{0}+x_{1}*x_{2}\\
\left(1-\lambda_{1}\right)x_{1}+x_{2}*x_{0}\\
\left(1-\lambda_{2}\right)x_{2}+x_{0}*x_{1}\\
\lambda_{0}\lambda_{0}+n\left(x_{1}\right)+n\left(x_{2}\right)\\
\lambda_{1}\lambda_{1}+n\left(x_{0}\right)+n\left(x_{2}\right)\\
\lambda_{2}\lambda_{2}+n\left(x_{0}\right)+n\left(x_{1}\right)
\end{array}\right)= & \left(\begin{array}{c}
x_{0}\\
x_{1}\\
x_{2}\\
\lambda_{0}\\
\lambda_{1}\\
\lambda_{2}
\end{array}\right),\label{eq:idemp condizione}
\end{align}
which is satisfied \emph{iff} (\ref{eq:Ver-2-1}) and (\ref{eq:Ver-1-1})
hold true along with the condition $\lambda_{0}+\lambda_{1}+\lambda_{2}=1$,
which isolate a specific representative of the class $\mathbb{R}\left(x_{\nu};\lambda_{\nu}\right)$.
We therefore have a one-to-one correspondence between points of the
Okubic projective plane as defined in (\ref{eq:punti piano pro})
and rank-1 idempotent elements of the algebra $\mathbb{A}_{1/2}\left(\mathcal{O}\right)$
.

It is worth noting that conditions to (\ref{eq:Ver-2-1}) and (\ref{eq:Ver-1-1})
together with $\text{Tr}\left(\left(x_{\nu};\lambda_{\nu}\right)\right)=1$,
imply that the quadratic norm is identically $1$ since 
\begin{align}
\left\Vert \left(x_{\nu};\lambda_{\nu}\right)\right\Vert  & =2n\left(x_{0}\right)+2n\left(x_{1}\right)+2n\left(x_{2}\right)+\lambda_{0}^{2}+\lambda_{1}^{2}+\lambda_{2}^{2}=\nonumber \\
 & =2\lambda_{1}\lambda_{2}+2\lambda_{2}\lambda_{0}+2\lambda_{0}\lambda_{1}+\lambda_{0}^{2}+\lambda_{1}^{2}+\lambda_{2}^{2}=\nonumber \\
 & =\left(\lambda_{0}+\lambda_{1}+\lambda_{2}\right)^{2}=1,
\end{align}
and, moreover that the cubic norm is identically zero since
\begin{align}
N\left(\left(x_{\nu};\lambda_{\nu}\right)\right) & =\lambda_{0}\lambda_{1}\lambda_{2}-\left(\lambda_{0}n\left(x_{0}\right)+\lambda_{1}n\left(x_{1}\right)+\lambda_{2}n\left(x_{2}\right)\right)+2\left\langle \left(x_{0}*e\right)*\left(x_{1}*x_{2}\right),e\right\rangle =\nonumber \\
 & =-2\lambda_{0}\lambda_{1}\lambda_{2}-2\lambda_{0}\left\langle \left(x_{0}*e\right)*x_{0},e\right\rangle =\nonumber \\
 & =-2\lambda_{0}\lambda_{1}\lambda_{2}-2\lambda_{0}\lambda_{1}\lambda_{2}\left\langle e,e\right\rangle =\nonumber \\
 & =0
\end{align}

\begin{rem}
In the case of the Jordan algebra $\mathfrak{J}_{3}\left(\mathbb{\mathbb{K}}\right)$
over a Hurwitz algebras $\mathbb{\mathbb{K}}\in\left\{ \mathbb{R},\mathbb{C},\mathbb{H},\mathbb{O}\right\} $,
the Veronese conditions constrain the determinant to be zero exactly
as for $\mathbb{A}_{1/2}\left(\mathcal{O}\right)$. Moreover, in case
of the Jordan algebra $\mathfrak{J}_{3}\left(\mathbb{\mathbb{K}}\right)$
the condition of an idempotent to be trace one, i.e. $\lambda_{0}+\lambda_{1}+\lambda_{2}=1$,
is equivalent to the requirement of being a rank-1 idempotent \cite{Compact Projective,corr Notes Octo}.
Therefore, it exists a perfect analogy between the correspondence
of the deformed Okubic Albert algebra $\mathbb{A}_{1/2}\left(\mathcal{O}\right)$
and the Okubic projective plane $\mathcal{O}P^{2}$ and the correspondence
between the usual projective planes $\mathbb{\mathbb{K}}P^{2}$ and
the simple rank-three Jordan algebras $\mathfrak{J}_{3}\left(\mathbb{\mathbb{K}}\right)$. 
\end{rem}

\medskip{}

\begin{rem}
We also have to note that, while $\mathbb{A}_{1/2}\left(\mathcal{O}\right)$
is not a Jordan algebra, the restriction on the idempotent elements
does satisfy the Jordan identity. This is a consequence of the flexibility
of the Okubo algebra. Indeed, if an algebra is flexible, i.e. 
\begin{equation}
\left(x\circ y\right)\circ x=x\circ\left(y\circ x\right),
\end{equation}
then, on the idempotent elements, we have 
\begin{align}
\left(x\circ y\right)\circ\left(x\circ x\right) & =\left(x\circ y\right)\circ x\\
 & =x\circ\left(y\circ x\right)\nonumber \\
 & =x\circ\left(y\circ\left(x\circ x\right)\right),\nonumber 
\end{align}
and thus idempotent elements fulfil the Jordan identity.
\end{rem}

\subsection*{Automorphisms of $\mathbb{A}_{1/2}\left(\mathcal{O}\right)$}

Freudenthal \cite{Freud 1965} and Rosenfeld \cite{Rosenf98,Rosenfeld-1993},
noted how the automorphisms of $\mathfrak{J}_{3}\left(\mathbb{\mathbb{O}}\right)$
could be interpreted as isometries of the octonionic projective plane
$\mathbb{O}P^{2}$, giving rise to an algebro-geometrical realization
of the real compact form of the exceptional Lie group $F_{4\left(-52\right)}$,
while the other real forms, i.e. $F_{4\left(4\right)}$ and $F_{4\left(-20\right)}$,
are obtained as isometry groups of the \emph{split-octonionic projective
plane} $\mathbb{O}_{s}P^{2}$ and the \emph{octonionic hyperbolic
plane} $\mathbb{O}H^{2}$ respectively (see \cite{Corr RealF,corr Notes Octo}
for a recent account). Rosenfeld himself in \cite{Rosenf98,Rosenfeld-1993}
proceeded in relating real forms of exceptional Lie groups with projective
and hyperbolic planes over tensorial products of Hurwitz algebras.
This approach had yielded to a definition of the so-called octonionic
Rosenfeld planes as homogeneous (and symmetric) spaces (see \cite{A Magic Approach}
for a comprehensive treatment). It is therefore interesting to study
the automorphisms of $\mathbb{A}_{1/2}\left(\mathcal{O}\right)$ in
order to extend Freudenthal and Rosenfeld's description to the Okubic
projective plane.

We start and notice that if $\varphi$ is an automorphism of the Okubo
algebra, namely if it belongs to $\text{Aut}\left(\mathcal{O}\right)$,
i.e. $\varphi\left(x*y\right)=\varphi\left(x\right)*\varphi\left(y\right)$,
then it exists an automorphism 
\begin{equation}
\Phi\left(x_{\nu};\lambda_{\nu}\right)=\left(\varphi\left(x_{0}\right),\varphi\left(x_{1}\right),\varphi\left(x_{2}\right);\lambda_{0},\lambda_{1},\lambda_{2}\right)\in\text{Aut}\left(\mathbb{A}_{1/2}\left(\mathcal{O}\right)\right),
\end{equation}
since, as shown in section 2, an automorphism of $\mathcal{O}$ is
also an isometry, i.e. $\left\langle \varphi\left(x\right),\varphi\left(y\right)\right\rangle =\left\langle x,y\right\rangle $.
It is therefore straighforward to realize that $SU\left(3\right)$
is a subgroup of $\text{Aut}\left(\mathbb{A}_{1/2}\left(\mathcal{O}\right)\right)$.
Moreover, we also note that $\mathbb{Z}/3\mathbb{Z}<\text{Aut}\left(\mathbb{A}_{1/2}\left(\mathcal{O}\right)\right)$
since 
\begin{equation}
\tau\left(\left(x_{0},x_{1},x_{2};\lambda_{0},\lambda_{1},\lambda_{2}\right)\right)=\left(x_{2},x_{0},x_{1};\lambda_{2},\lambda_{0},\lambda_{1}\right)
\end{equation}
is an automorphism. On the other hand, a transposition of just two
coordinates does not yield to an automorphism; this is easily seen
considering the element
\begin{equation}
\left(x,0,0;0,0,0\right)\circ\left(0,y,0;0,0,0\right)=\left(0,0,q\left(x*y\right);0,0,0\right).
\end{equation}
Then, suppose $\sigma$ to be an application that switches the coordinates
0 with 1, we then have that 
\begin{align}
\sigma\left(\left(0,0,q\left(x*y\right);0,0,0\right)\right) & =\left(0,0,q\left(x*y\right);0,0,0\right)=\\
 & =\left(0,x,0;0,0,0\right)\circ\left(y,0,0;0,0,0\right),
\end{align}
which happens \emph{iff }$\left(y*x\right)=\left(x*y\right)$, which
would require the product $*$ to be commutative. 

In order to conclude the present investigation, we have to consider
the vector space decomposition of $\mathbb{A}_{1/2}\left(\mathcal{O}\right)$
as 
\begin{equation}
\mathbb{A}_{1/2}\left(\mathcal{O}\right)\simeq\mathcal{O}^{\left(0\right)}\oplus\mathcal{O}^{\left(1\right)}\oplus\mathcal{O}^{\left(2\right)}\oplus\mathbb{R}^{\left(0\right)}\oplus\mathbb{R}^{\left(1\right)}\oplus\mathbb{R}^{\left(2\right)},
\end{equation}
and $w_{i}$ and $\omega_{i}$ are the maps from $\mathcal{O}$ and
$\mathbb{R}$ respectively in $\mathbb{A}_{1/2}\left(\mathcal{O}\right)$,
such that they send the element of the algebra into the $i$ coordinate,
e.g. $w_{2}\left(x\right)\coloneqq\left(0,0,x;0,0,0\right)$ and $\omega_{2}\left(\lambda\right)\coloneqq\left(0,0,0;0,0,\lambda\right)$.
The algebra $\mathbb{A}_{1/2}\left(\mathcal{O}\right)$ has unit $\left(0,0,0;1,1,1\right)$
which can be obtained as the sum of three primitive, i.e. that are
not sum of other idempotents, and orthogonal, i.e. such that $e_{i}\circ e_{j}=0$,
idempotents given by
\begin{align}
e_{0} & \coloneqq\omega_{0}\left(1\right)=\left(0,0,0;1,0,0\right),\\
e_{1} & \coloneqq\omega_{1}\left(1\right)=\left(0,0,0;0,1,0\right),\\
e_{2} & \coloneqq\omega_{2}\left(1\right)=\left(0,0,0;0,0,1\right).
\end{align}
 We thus have that $\left\{ e_{0},e_{1},e_{2}\right\} $ is a complete
system of primitive idempotents. In fact is the only complete system
of primitive idempotents with real coordinates. We now state the following 
\begin{lem}
\label{lem:If--is}If $\Phi$ is an automorphism, then 
\begin{equation}
\Phi\left(e_{i}\right)=e_{\sigma\left(i\right)},
\end{equation}
for a suitable permutation $\sigma\in\mathfrak{S}_{3}$. In other
words, a complete system of primitive idempotents with only real coordinates
must be sent in another complete system of primitive idempotents with
only real coordinates. 
\end{lem}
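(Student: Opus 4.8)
The plan is to prove the slightly stronger fact that $\Phi(e_{i})=e_{\sigma(i)}$ for some $\sigma\in\mathfrak{S}_{3}$; since $\{e_{0},e_{1},e_{2}\}$ is, as noted above, the only complete system of primitive idempotents with real coordinates, the ``in other words'' reformulation then follows at once. First I would record the soft part: an automorphism fixes the unit, so $\Phi(e_{0})+\Phi(e_{1})+\Phi(e_{2})=(0,0,0;1,1,1)$; each $\Phi(e_{i})$ is idempotent; the $\Phi(e_{i})$ are pairwise orthogonal (apply $\Phi$ to $e_{i}\circ e_{j}=0$); and each $\Phi(e_{i})$ is primitive, a nontrivial orthogonal splitting of $\Phi(e_{i})$ transporting under $\Phi^{-1}$ to one of $e_{i}$. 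Hence $\{\Phi(e_{0}),\Phi(e_{1}),\Phi(e_{2})\}$ is again a complete system of primitive orthogonal idempotents, so each $\Phi(e_{i})$ is a rank-one (equivalently trace-one) idempotent and, by the correspondence established above, a Veronese vector: writing $\Phi(e_{i})=(a_{0}^{(i)},a_{1}^{(i)},a_{2}^{(i)};\ell_{0}^{(i)},\ell_{1}^{(i)},\ell_{2}^{(i)})$, the relations (\ref{eq:Ver-1-1}) and (\ref{eq:Ver-2-1}) hold together with $\ell_{0}^{(i)}+\ell_{1}^{(i)}+\ell_{2}^{(i)}=1$.

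The heart of the argument is to show that the scalar triple $(\ell_{0}^{(i)},\ell_{1}^{(i)},\ell_{2}^{(i)})$ is a coordinate vector of $\mathbb{R}^{3}$ for each $i$. From idempotency and positive definiteness of $n$ on the division algebra $\mathcal{O}$ (cf. (\ref{eq:norma-d})) one gets $\ell_{\nu}^{(i)}\in[0,1]$, and from (\ref{eq:Ver-2-1}) that $n(a_{\nu}^{(i)})=\ell_{\nu+1}^{(i)}\ell_{\nu+2}^{(i)}\ge 0$ (indices mod $3$). I would then exploit all components of $\Phi(e_{i})\circ\Phi(e_{j})=0$. Its $\mathbb{R}^{3}$-components give $\langle a_{\nu}^{(i)},a_{\nu}^{(j)}\rangle=\ell_{\nu}^{(i)}\ell_{\nu}^{(j)}-\tfrac12\sum_{\mu}\ell_{\mu}^{(i)}\ell_{\mu}^{(j)}$; its $\mathcal{O}$-components, paired against the $a_{\mu}^{(k)}$ and rewritten by means of the symmetric-composition relation $\langle x*y,z\rangle=\langle x,y*z\rangle$ (whence also $=\langle y,z*x\rangle$) together with the Veronese identities $a_{\nu+1}^{(k)}*a_{\nu+2}^{(k)}=\ell_{\nu}^{(k)}a_{\nu}^{(k)}$ for each $k$, reduce to scalar relations which --- using the value $q=\tfrac12$ and $a_{\nu}^{(2)}=-a_{\nu}^{(0)}-a_{\nu}^{(1)}$, $\ell_{\nu}^{(2)}=1-\ell_{\nu}^{(0)}-\ell_{\nu}^{(1)}$ --- collapse to $\sum_{\mu}\ell_{\mu}^{(i)}\ell_{\mu}^{(j)}=0$ for all $i\ne j$.

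Granting this, the three triples $(\ell_{0}^{(i)},\ell_{1}^{(i)},\ell_{2}^{(i)})$ are pairwise orthogonal in $\mathbb{R}^{3}$, have entries in $[0,1]$, and sum to $(1,1,1)$. Orthogonality of nonnegative vectors forces $\ell_{\nu}^{(i)}\ell_{\nu}^{(j)}=0$ for $i\ne j$, so in each coordinate $\nu$ exactly one $\ell_{\nu}^{(i)}$ is nonzero, and it equals $1$ because $\sum_{i}\ell_{\nu}^{(i)}=1$; since $\ell_{0}^{(i)}+\ell_{1}^{(i)}+\ell_{2}^{(i)}=1$, no triple carries two unit entries, so $\ell_{\nu}^{(i)}=\delta_{\nu,\sigma(i)}$ for some $\sigma\in\mathfrak{S}_{3}$. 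Finally (\ref{eq:Ver-2-1}) gives $n(a_{\nu}^{(i)})=\ell_{\nu+1}^{(i)}\ell_{\nu+2}^{(i)}=0$ for every $\nu$ (two distinct residues are never both $\sigma(i)$), whence $a_{\nu}^{(i)}=0$ by positive definiteness of $n$, and therefore $\Phi(e_{i})=(0,0,0;\delta_{0,\sigma(i)},\delta_{1,\sigma(i)},\delta_{2,\sigma(i)})=e_{\sigma(i)}$, which is the assertion.

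The step I expect to be the real obstacle is the scalar reduction in the second paragraph, i.e. squeezing $\sum_{\mu}\ell_{\mu}^{(i)}\ell_{\mu}^{(j)}=0$ out of the $\mathcal{O}$-valued parts of the orthogonality relations; it is exactly here that the deformation $q=\tfrac12$ is used in an essential way, since at a Jordan value of $q$ the automorphism group acts transitively on complete systems of primitive idempotents and the conclusion of the lemma fails. A secondary ingredient that must be in place is that a primitive idempotent of $\mathbb{A}_{1/2}(\mathcal{O})$ necessarily has trace one; this can be read off from the structural discussion above, or deduced from the identities $\|z\|=\mathrm{Tr}(z\circ z)$ and $\|w\|=\mathrm{Tr}(w)$ for idempotents $w$ together with positive definiteness of $\|\cdot\|$.
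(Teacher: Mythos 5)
Your strategy cannot be completed: the step you yourself flag as ``the real obstacle'' is not merely hard, it is false. After your first paragraph you retain only the information that $\{\Phi(e_{0}),\Phi(e_{1}),\Phi(e_{2})\}$ is a complete orthogonal system of trace-one (hence Veronese) idempotents, and you propose to extract $\sum_{\mu}\ell_{\mu}^{(i)}\ell_{\mu}^{(j)}=0$ from the orthogonality relations alone. But $\{e_{0},e_{1},e_{2}\}$ is not the only such system. Pick $a\in\mathcal{O}$ with $n(a)=\tfrac{1}{4}$ and set
\begin{equation}
u=\left(a,0,0;0,\tfrac{1}{2},\tfrac{1}{2}\right),\qquad v=\left(-a,0,0;0,\tfrac{1}{2},\tfrac{1}{2}\right).
\end{equation}
A direct check against (\ref{eq:JordanElduque}) with $q=\tfrac{1}{2}$ and $\left\langle x,x\right\rangle =n\left(x\right)$ shows that $u$ and $v$ are trace-one Veronese idempotents (the fifth component of $u\circ u$ is $\tfrac{1}{4}+n\left(a\right)=\tfrac{1}{2}$), that $u\circ v=0$ (its fifth component is $\tfrac{1}{4}+\left\langle a,-a\right\rangle =0$ and its first Okubic component is $-\tfrac{1}{2}a+\tfrac{1}{2}a=0$), that $e_{0}\circ u=e_{0}\circ v=0$ by (\ref{eq:leftaction e0}), and that $e_{0}+u+v=\left(0,0,0;1,1,1\right)$. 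One checks from the idempotency equations that every nonzero idempotent of $\mathbb{A}_{1/2}\left(\mathcal{O}\right)$ has trace at least $1$, so trace-one idempotents are primitive; hence $\left\{ e_{0},u,v\right\} $ is a complete orthogonal system of primitive idempotents with $\sum_{\mu}\ell_{\mu}^{(u)}\ell_{\mu}^{(v)}=\tfrac{1}{2}\neq0$. Consequently no manipulation of the components of $\Phi\left(e_{i}\right)\circ\Phi\left(e_{j}\right)=0$ can produce your identity --- indeed the pairings you describe collapse to $0=0$ once you substitute $\left\langle a_{\nu},b_{\nu}\right\rangle =\ell_{\nu}m_{\nu}-\tfrac{1}{2}\sum_{\mu}\ell_{\mu}m_{\mu}$ and the Veronese relations --- and the lemma is not a statement about frames at all: any proof must use the automorphism property of $\Phi$ beyond the fact that it carries one complete orthogonal system of primitive idempotents to another.

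The paper's proof does exactly that, comparing $\dim\ker L_{e_{0}}=10$ with $\dim\ker L_{\varepsilon}$ for a primitive idempotent $\varepsilon$ of non-real type, using $\Phi\circ L_{z}=L_{\Phi\left(z\right)}\circ\Phi$. (The pair $e_{0},u$ above is also the critical test case for that route: for $\varepsilon=u$ the two middle equations cutting out $\ker L_{u}$ become proportional precisely at $q=\tfrac{1}{2}$, because $x_{0}*\left(y_{2}*x_{0}\right)=n\left(x_{0}\right)y_{2}$ and $4q^{2}=1$, so the kernel-dimension count there deserves a careful second look.) Your ``secondary ingredient'' is likewise not in place as written: $\left\Vert w\right\Vert =\mathrm{Tr}\left(w\right)$ for idempotents only yields $\mathrm{Tr}\left(w\right)>0$, not $\mathrm{Tr}\left(w\right)=1$ for primitive $w$; to get that you must actually classify the idempotents (the equations force $\mathrm{Tr}\in\left\{ 0,1,2,3\right\} $, and one must show every trace-$2$ and trace-$3$ idempotent splits). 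That part is repairable; the central step is not.
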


\begin{proof}
That a complete primitive system of idempotents must go into a complete
primitive system of idempotents it is self-evident. What is left to
show is that a primitive real idempotent can only go in a primitive
real idempotent, where we have called \emph{real idempotents} those
of the form $e=\left(0,0,0;\alpha_{0},\alpha_{1},\text{\ensuremath{\alpha_{2}}}\right)$.
First of all we notice that a primitive real idempotent has only one
non-null real coordinate, since for (\ref{eq:idemp condizione}) all
real coordinates must be idempotent and thus an idempotent of the
form $e=\left(0,0,0;\alpha_{0},\alpha_{1},0\right)$, would be decomposable
into the sum of two idempotents $\left(0,0,0;\alpha_{0},0,0\right)$
and $\left(0,0,0;0,\alpha_{1},0\right)$, thus violating the condition
of being primitive. Therefore, without any loss of generality, let
us consider the left action of a primitive real idempotent observing
the left action of $e_{0}=\left(0,0,0;1,0,0\right)$. Since 
\begin{equation}
L_{e_{0}}\left(y_{\eta};\mu_{\eta}\right)=e_{0}\circ\left(y_{\eta};\mu_{\eta}\right)=\left(0,\frac{1}{2}y_{1},\frac{1}{2}y_{2};\mu_{0},0,0\right),\label{eq:leftaction e0}
\end{equation}
we then have that the image $L_{e_{0}}$ is given by $\mathcal{O}^{\left(1\right)}\oplus\mathcal{O}^{\left(2\right)}\oplus\mathbb{R}^{\left(0\right)}$.
Now let us consider the kernel of the left action of $e_{0}$, i.e.
\begin{equation}
\text{ker}L_{e_{0}}=\left\{ \left(y_{\eta};\mu_{\eta}\right)\in\mathbb{A}_{1/2}\left(\mathcal{O}\right):e_{0}\circ\left(y_{\eta};\mu_{\eta}\right)=0\right\} .
\end{equation}
We notice that for every $\left(y_{\eta};\mu_{\eta}\right)\in\text{ker}L_{e_{0}}$
we must have a corresponding element in $\text{ker}L_{\Phi\left(e_{0}\right)}$
since
\begin{equation}
0=\Phi\left(0\right)=\Phi\left(e_{0}\circ\left(y_{\eta};\mu_{\eta}\right)\right)=\Phi\left(e_{0}\right)\circ\Phi\left(y_{\eta};\mu_{\eta}\right),
\end{equation}
so that $\Phi\left(y_{\eta};\mu_{\eta}\right)\in\text{ker}L_{\Phi\left(e_{0}\right)}$.
We then have 
\begin{equation}
\text{dim}\left(\text{ker}L_{\Phi\left(e_{0}\right)}\right)\geq\text{dim}\left(\text{ker}L_{e_{0}}\right),
\end{equation}
and from (\ref{eq:leftaction e0}) it is clear that $\text{dim}\left(\text{ker}L_{e_{0}}\right)=10$.
If $\Phi\left(e_{0}\right)$ is a primitive real idempotent the condition
is clearly satisfied. Now, let us suppose that $\Phi\left(e_{0}\right)=\varepsilon=\left(x_{\nu};\lambda_{\nu}\right)$
is a primitive, but not of real type, idempotent. We therefore have
that at least one Okubic coordinate $x_{\nu}\in\mathcal{O}$ has to
be different from zero (we also notice from (\ref{eq:JordanElduque}),
that if we have more non-null Okubonic coordinate, then the dimension
of the kernel is smaller, so this is not a restrictive hypothesis).
Let us suppose without any loss of generality that the idempotent
$\varepsilon$ has $x_{0}\neq0$. Since Okubo algebra is a division
algebra, this means that $n\left(x_{0}\right)\neq0$, and since every
idempotent of non-real type satisfies Veronese conditions (\ref{eq:Ver-1-1})
and (\ref{eq:Ver-2-1}) at least $\lambda_{1},\lambda_{2}$ are non-null,
because $n\left(x_{0}\right)=\lambda_{1}\lambda_{2}.$ But then, analyzing
the kernel, of the left action $L_{\varepsilon}$, we have that
\begin{equation}
L_{\varepsilon}\circ\left(y_{\eta};\mu_{\eta}\right)=\left(\begin{array}{c}
\frac{\lambda_{1}+\lambda_{2}}{2}y_{0}+\frac{\mu_{1}+\mu_{2}}{2}x_{0}\\
\frac{\lambda_{2}}{2}y_{1}+q\left(y_{2}*x_{0}\right)\\
\frac{\lambda_{1}}{2}y_{2}+q\left(x_{0}*y_{1}\right)\\
0\\
\lambda_{1}\mu_{1}+\left\langle x_{0},y_{0}\right\rangle \\
\lambda_{2}\mu_{2}+\left\langle x_{0},y_{0}\right\rangle 
\end{array}\right),
\end{equation}
from which one obtain $\text{dim}\left(\text{ker}L_{\varepsilon}\right)=1<\text{dim}\left(\text{ker}L_{e_{0}}\right)$,
thus ruling out the possibility of $\Phi\left(e_{0}\right)$ being
$\varepsilon$.
\end{proof}
Using the Lemma \ref{lem:If--is} we can now investigate all automorphisms.
Let $\Phi\in\text{Aut}\left(\mathbb{A}_{1/2}\left(\mathcal{O}\right)\right)$,
we then have that 
\begin{equation}
\Phi\left(e_{i}\right)=e_{\sigma\left(i\right)},
\end{equation}
for some $\sigma\in\mathfrak{S}_{3}$ and for $i\in\mathbb{Z}/3\mathbb{Z}$.
Now, considering (\ref{eq:leftaction e0}) we then have that 

\begin{equation}
\mathcal{O}^{\left(i\right)}=\left(e_{i+1}\circ\mathbb{A}_{1/2}\left(\mathcal{O}\right)\right)\cap\left(e_{i+2}\circ\mathbb{A}_{1/2}\left(\mathcal{O}\right)\right),
\end{equation}
where $i\in\mathbb{Z}/3\mathbb{Z}$. Therefore, we have that 
\begin{align}
\Phi\left(\mathcal{O}^{\left(i\right)}\right) & =\Phi\left(\left(e_{i+1}\circ\mathbb{A}_{1/2}\left(\mathcal{O}\right)\right)\cap\left(e_{i+2}\circ\mathbb{A}_{1/2}\left(\mathcal{O}\right)\right)\right)=\\
 & =\Phi\left(e_{i+1}\circ\mathbb{A}_{1/2}\left(\mathcal{O}\right)\right)\cap\Phi\left(e_{i+2}\circ\mathbb{A}_{1/2}\left(\mathcal{O}\right)\right)=\\
 & =\left(\Phi\left(e_{i+1}\right)\circ\mathbb{A}_{1/2}\left(\mathcal{O}\right)\right)\cap\left(\Phi\left(e_{i+2}\right)\circ\mathbb{A}_{1/2}\left(\mathcal{O}\right)\right)=\\
 & =\mathcal{O}^{\left(\tau\left(i\right)\right)},
\end{align}
for some $\tau\in\mathfrak{S}_{3}$ and for every $i\in\mathbb{Z}/3\mathbb{Z}$.
Therefore there exist three automorphisms $\varphi_{0}$, $\varphi_{1}$
and $\varphi_{2}$ of $\mathcal{O}$ such that 
\begin{equation}
\Phi\left(w_{i}\left(x\right)\right)=w_{\tau\left(i\right)}\left(\varphi_{i}\left(x\right)\right).
\end{equation}
A straightforward computation from (\ref{eq:JordanElduque}) shows
that in order for $\Phi$ to be an automorphisms it must be that
\begin{equation}
\varphi_{i}\left(x\right)*\varphi_{i+1}\left(y\right)=\varphi_{i+2}\left(x*y\right),\label{eq:Z3 graded action}
\end{equation}
for every $i\in\mathbb{Z}/3\mathbb{Z}$ and $x,y\in\mathcal{O}$.
Relation (\ref{eq:Z3 graded action}) defines a $\mathbb{Z}/3\mathbb{Z}$
graded action of three copies of $\text{SU}\left(3\right)$ on $\mathbb{A}_{1/2}\left(\mathcal{O}\right)$,
from which we deduce 
\begin{equation}
\text{Aut}\left(\mathbb{A}_{1/2}\left(\mathcal{O}\right)\right)=\mathbb{Z}/3\mathbb{Z}\ltimes\left(\text{SU}\left(3\right)\times\text{SU}\left(3\right)\times\text{SU}\left(3\right)\right),
\end{equation}
where $\mathbb{Z}/3\mathbb{Z}$ is a sort of ``triality symmetry''
which interchanges the $\text{SU}\left(3\right)$ factors in a cyclic
way.

\section{Conclusions and further developments}

In this article we presented a deformation of the Okubic Albert algebra
$\mathbb{A}_{q}\left(\mathcal{O}\right)$ introduced by Elduque, which
for $q=\pm1$ returns a Jordan algebra and for $q=\nicefrac{1}{2}$
constitutes an algebraic equivalent of the Okubic affine and projective
planes. Even though $\mathbb{A}_{\nicefrac{1}{2}}\left(\mathcal{O}\right)$
is not a Jordan algebra, nevertheless it perfectly realizes, for the
Okubic case, the well-known relation between points of the octonionic
plane $\mathbb{O}P^{2}$ and rank-1 idempotent elements of the exceptional
Jordan algebra $\mathfrak{J}_{3}\left(\mathbb{O}\right)$. In the
octonionic case, the automorphisms of $\mathfrak{J}_{3}\left(\mathbb{O}\right)$
yield to an algebro-geometrical realization of the compact real form
of $F_{4\left(-52\right)}$ as the isometry group of the octonionic
projective plane $\mathbb{O}P^{2}$. Analogously, in the Okubic case
we have found that $\text{Aut}\left(\mathbb{A}_{1/2}\left(\mathcal{O}\right)\right)$
is the group $\mathbb{Z}/3\mathbb{Z}\ltimes\text{SU}\left(3\right)^{\times3}$
. A natural development of the present article would be definition
and the study of some sort of projective plane over the split-Okubo
algebra $\mathcal{O}_{s}$. Indeed, even though with some key differences,
similar results to those found for the octonionic projective plane
do hold in the case of the \emph{split-octonionic projective plane}
$\mathbb{O}_{s}P^{2}$ and in the case of the \emph{octonionic hyperbolic
plane} $\mathbb{O}H^{2}$ yielding, when considering the respectively
isometry groups, to concrete realizations of the real form of the
Lie groups $F_{4\left(4\right)}$ and $F_{4\left(-20\right)}$. It
is therefore reasonable to expect that an analogous situation would
hold for a split-Okubic projective plane $\mathcal{O}_{s}P^{2}$ even
though, as in the case of the split-octonions, dealing with a non-division
algebra will yield to non trivial geometrical implications.

\section{Acknowledgment}

The work of AM is supported by a ``Maria Zambrano'' distinguished
researcher fellowship, financed by the European Union within the NextGenerationEU
program.

\bigskip{}
$*$\noun{ Departamento de Matemática, }\\
\noun{Universidade do Algarve, }\\
\noun{Campus de Gambelas, }\\
\noun{8005-139 Faro, Portugal} 
\begin{verbatim}
a55499@ualg.pt.

\end{verbatim}
$\dagger$\noun{ Instituto de Física Teorica, Dep.to de Física,}\\
\noun{Universidad de Murcia, }\\
\noun{Campus de Espinardo, }\\
\noun{E-30100, Spain}
\begin{verbatim}
jazzphyzz@gmail.com
\end{verbatim}
$\ddagger$\noun{ Dipartimento di Scienze Matematiche, Informatiche
e Fisiche, }\\
\noun{Università di Udine, }\\
\noun{Udine, 33100, Italy} 
\begin{verbatim}
francesco.zucconi@uniud.it


\end{verbatim}

\end{document}